\theoremstyle{plain}
\newtheorem{THM}{Theorem}
\newtheorem{lemma}{Lemma}[section]
\newtheorem{prop}[lemma]{Proposition}
\newtheorem{thm}[lemma]{Theorem}
\newtheorem{defi}[lemma]{Definition}
\newtheorem{cor}[lemma]{Corollary}
\theoremstyle{definition}
\newtheorem*{ack}{Acknowledgements}
\newtheorem{ex}[lemma]{Example}
\newtheorem{remark}[lemma]{Remark}
\theoremstyle{definition}
\newtheorem*{pf2}{Proof of Theorem \ref{thm:PMT}}
\newtheorem*{pfthunbounded}{Proof of Theorem~\ref{th:unbounded}}
\newtheorem*{pfthunbounded-special}{Proof of Theorem~\ref{th:unbounded} (special case)}
\newtheorem*{pfmcf}{Proof of Theorem \ref{th:MCF}}
\newtheorem*{pf-complete}{Proof of Theorem \ref{th:halfspace}}
\numberwithin{equation}{section}
\theoremstyle{remark}
\newtheorem*{pf}{Proof}
\newcommand{\dive}{\textup{div}}
\newcommand{\p}{\partial}
\newenvironment{enumeratei}{\begin{enumerate}[\upshape (i)]}{\end{enumerate}}
\begin{document}
\title[Hypersurfaces with nonnegative scalar curvature]{Hypersurfaces with nonnegative scalar curvature}
\author{Lan--Hsuan Huang}
\address{Department of Mathematics\\
 University of Connecticut\\
 Storrs, CT 06269, USA\\}
\email{lan-hsuan.huang@uconn.edu}

\author{Damin Wu}
\address{Department of Mathematics\\
 University of Connecticut\\
 Storrs, CT 06269, USA\\}

\email{damin.wu@uconn.edu}

\begin{abstract}
We show that closed hypersurfaces in Euclidean space with nonnegative scalar curvature are weakly mean convex. In contrast, the statement is no longer true if the scalar curvature is replaced by the $k$th mean curvature, for $k$ greater than $2$, as we construct the counter-examples for all $k$ greater than $2$. Our proof relies on a new geometric argument which relates the scalar curvature and mean curvature of a hypersurface to the mean curvature of the level sets of a height function. By extending the argument, we show that complete non-compact asymptotically flat hypersurfaces  with nonnegative scalar curvature  are weakly mean convex and prove the positive mass theorem for such hypersurfaces in all dimensions. 
\end{abstract}
\maketitle
\section{Introduction}
For $n$-dimensional hypersurfaces in Euclidean space, it is natural to understand the relations between the intrinsic curvature and the extrinsic curvature. In 1897, Hadamard~\cite{H} proved that a closed (i.e. compact without boundary) surface embedded in $\mathbb{R}^3$ of positive Gaussian curvature is the boundary of a convex body. Hadamard's result was extended by Stoker~\cite{St} to the complete noncompact case. 

In contrast to the strict inequality assumption on the curvature, the non-strict inequality case is more subtle.  
About sixty years later, Chern--Lashof~\cite{CL}  proved that a closed surface in $\mathbb{R}^3$ of non-negative Gauss curvature is the boundary of a weakly convex body.

For $n \ge 2$, Sacksteder~\cite{S} proved that a hypersurface with nonnegative sectional curvature has  semi-positive definite  second fundamental form. His proof used the earlier results of van~Heijenoort~\cite{V} and Hartman--Nirenberg~\cite{HN}. A simpler proof was later provided by do~Carmo--Lima~\cite{DL}. The further study of convex hypersurfaces can be found in, for example, H.~Wu~\cite{HWu} and the references therein.

Among various notions of the intrinsic curvature, the sectional curvature is the strongest (pointwise) curvature condition, while the scalar curvature is the weakest. In this paper, we consider the condition only on the scalar curvature without imposing any condition on the sectional curvature. We would like to know what kind of convexity can be implied by the nonnegative scalar curvature. 

Our another motivation comes from the study of the $k$th mean curvature. For an $n$-dimensional hypersurface, its $k$th mean curvature ($1 \le k \le n$), denoted by $\sigma_k$, is defined to be the $k$th symmetric polynomial of its principal curvatures. It is known that $\sigma_{2k}$ is intrinsically defined, while $\sigma_{2k-1}$ is not, for each $k$~\cite{Reilly:1973}. In particular, $\sigma_1$ is the mean curvature, $2 \sigma_2$ is the scalar curvature, and $\sigma_n$ is the Gauss--Kronecker curvature.

If a closed smooth hypersurface has positive $k$th mean curvature, then its $l$-th mean curvature is positive for each $1 \le l \le k$ (see, for example, \cite[Proof of Proposition 3.3]{HS} and \cite{CNSIII}). In particular, when $k=2$, the result follows from the Gauss equation. That is, a closed hypersurface with positive scalar curvature has positive mean curvature (up to an orientation). It is natural to ask  whether the analogous result holds when one replaces the condition $\sigma_k > 0$ by $\sigma_k \ge 0$. Such statement was claimed in \cite[Proposition 3.3]{HS}, while the proof only works for strict inequalities. It turns out that this statement is not true for all $k \ge 3$, as we construct in Section~\ref{se:example} a family of examples with $\sigma_k \ge 0$ but $\sigma_1 < 0$ somewhere, for $k \ge 3$. These examples are inspired by Chern--Lashof~\cite{CL}. 

In contrast to the counter-examples for all $k\ge 3$,  we prove that the statement holds for $k = 2$. More precisely, we have the following result. 
\begin{THM} \label{thm:R}
Let $n\ge 2$ and $M$ a closed embedded $n$-dimensional  $C^{n+1}$ hypersurface in $\mathbb{R}^{n+1}$. If the scalar curvature of $M$ is nonnegative, then its mean curvature $H$  has a sign, i.e., either $H\ge 0$ or $H\le 0$ everywhere on $M$. 
\end{THM}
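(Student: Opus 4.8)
The plan is to turn "nonnegative scalar curvature" into a pointwise algebraic condition and then read off the sign of $H$ by slicing $M$ with a linear height function. First, by the Gauss equation for a hypersurface of $\mathbb{R}^{n+1}$ the scalar curvature is $2\sigma_2 = H^2 - |A|^2$, where $A$ is the second fundamental form and $H=\operatorname{tr}A$; so the hypothesis is precisely the pointwise inequality $H^2\ge |A|^2$. One consequence is immediate: wherever $H=0$ one gets $0\ge |A|^2$, hence $A=0$ there, so $\{H=0\}=\{A=0\}$. In particular, if $H$ were to change sign on the connected manifold $M$, this common zero set would be a nonempty closed set separating the open sets $\{H>0\}$ and $\{H<0\}$.

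Next I would fix a unit vector $e\in\mathbb{R}^{n+1}$ and set $u=\langle x,e\rangle|_M$; since $M$ is compact, $u$ attains its maximum at $p_+$ and minimum at $p_-$, where $T_pM$ is horizontal. For a regular value $t$, the level set $\Sigma_t=u^{-1}(t)$ is an embedded $(n-1)$-submanifold lying in the flat affine hyperplane $P_t=\{\langle x,e\rangle=t\}$. Along $\Sigma_t$, work in the orthonormal frame of $TM$ given by $N=\nabla^M u/|\nabla^M u|$ (the unit normal of $\Sigma_t$ inside $M$) together with a frame of $T\Sigma_t$, and use two elementary identities: (i) $\operatorname{Hess}^M u=-\langle\nu,e\rangle\,A$ for a suitable unit normal $\nu$ of $M$; and (ii) because $\Sigma_t$ lies in the hyperplane $P_t$, its mean-curvature vector in $\mathbb{R}^{n+1}$ is horizontal, so projecting that vector onto $\nu$ shows that the trace of $A$ over $T\Sigma_t$ equals $\pm|\nabla^M u|$ times the mean curvature $H_{\Sigma_t}$ of $\Sigma_t$ as a hypersurface of $P_t$. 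Writing $H=A(N,N)+\operatorname{tr}_{\Sigma_t}A$ and $|A|^2=A(N,N)^2+2|A(N,\cdot)|^2+|A_{\Sigma_t}|^2$, substituting into $H^2\ge|A|^2$, and applying Cauchy--Schwarz to the tangential block, one obtains the key geometric inequality $|A(N,N)|\le|H|$. Consequently $\operatorname{tr}_{\Sigma_t}A=H-A(N,N)$ has the same sign as $H$, so at every regular point with $\langle\nu,e\rangle\neq0$,
\[
\operatorname{sgn}(H)=-\operatorname{sgn}\langle\nu,e\rangle\cdot\operatorname{sgn}(H_{\Sigma_t}).
\]
In words: the sign of $H$ is dictated by the sign of the mean curvature of the hyperplane slice $\Sigma_t$, corrected by which way $M$ is facing.

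Then I would propagate the sign. Orienting $\nu$ as the outward normal of the domain $\Omega$ enclosed by $M$ (Jordan--Brouwer), the extremality of $u$ at $p_\pm$ together with identity (i) forces $A\ge0$ there, so $H(p_\pm)\ge0$. For $t$ just below the top value, $\Sigma_t$ bounds the small cap-slice $\Omega\cap P_t$, whose mean curvature is sign-definite, and the displayed identity pins $H>0$ there. A continuity argument in $t$ then shows the sign cannot flip: if $H<0$ somewhere, examine the topmost level $t_0$ at which $\{H\le0\}$ meets $\Sigma_{t_0}$ and compare, along that level set, the two ways of expressing $H$ — from the region above, where $H\ge0$, and from the geometry of the slices $\Omega\cap P_t$ of the fixed domain $\Omega$, whose boundary mean curvature integrated against the outward conormal is sign-controlled — to reach a contradiction. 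Combined with the separation property from the first paragraph, this gives $H\ge0$ with respect to the outward normal, which is one of the two alternatives in the statement.

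The step I expect to fight hardest for is the non-generic locus: critical values of $u$, where $\Sigma_t$ may be singular, disconnected, or change topology, and the set $\langle\nu,e\rangle=0$, where the displayed identity degenerates. I would handle this by choosing $e$ generically so that $u$ is a Morse function (using the $C^{n+1}$ regularity and Sard), controlling the sign of $H$ across each Morse critical level, and treating separately the borderline case in which $\{H=0\}=\{A=0\}$ has nonempty interior — there the interior is totally geodesic, i.e.\ an open piece of a hyperplane, which can be excluded by an elementary argument. Keeping track of the several components of $\Sigma_t$ and which one lies below which is the bookkeeping price of the propagation step.
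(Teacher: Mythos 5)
Your setup is right, and the pointwise inequality you derive is essentially the paper's key geometric inequality: from $H^2\ge |A|^2$, writing $H=A(N,N)+\operatorname{tr}_{\Sigma_t}A$, dropping $2|A(N,\cdot)^{\top}|^2\ge 0$, and applying Cauchy--Schwarz to the $(n-1)\times(n-1)$ tangential block, one gets $|A(N,N)|\le|H|$, hence $H\cdot\operatorname{tr}_{\Sigma_t}A\ge 0$, which is exactly the content of Corollary~\ref{co:HHR} expressed in frame language. Your identities (i) and (ii), the deduction $\{H=0\}=\{A=0\}$ from the Gauss equation, and the observation that this set separates $\{H>0\}$ from $\{H<0\}$ when $H$ changes sign are all correct.

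The gap is in the propagation step, which is the heart of the theorem and which your sketch never closes. At the level $t_0=\max\{u(q):H(q)\le 0\}$ all you learn is that some $q_0$ has $H(q_0)=0$, hence $A(q_0)=0$, hence $\operatorname{Hess}^M u(q_0)=0$ and $H_{\Sigma_{t_0}}(q_0)=0$ at a regular point; there is no contradiction yet. The ``two ways of expressing $H$'' and the ``boundary mean curvature integrated against the outward conormal'' are not spelled out, and I do not see what identity or comparison they would supply. The structural obstacle is that $\{A=0\}$ is not a priori a submanifold, need not lie in a single level of $u$, and can cross many slices of a fixed height function; choosing $e$ so that $u$ is Morse controls the critical set of $u$ but says nothing about $\{A=0\}$. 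The paper resolves exactly this with Sacksteder's Lemma (Lemma~\ref{le:geodesic}): using the $C^{n+1}$ regularity, Sard's theorem is applied to the \emph{Gauss map} (not to a height function) to show $\nu(\{A=0\})$ has Hausdorff dimension zero, and then A.~P.~Morse's theorem forces each connected component of $\{A=0\}$ to lie in a single tangent hyperplane. This is what determines the \emph{correct} slicing direction, reduces the picture to a graph $f$ with $f=c$, $|Df|=0$ on the separating component, and sets up the decisive quantitative step (Lemma~\ref{le:Hsign}): at a convex point of the enclosed planar region, the Hopf boundary lemma for the mean curvature operator (Proposition~\ref{pr:nonempty}) guarantees nonempty nearby superlevel sets, and a translated tangent sphere gives $H_{\Sigma_\epsilon}>0$ at a first touching point, contradicting $H_{\Sigma_\epsilon}\le 0$ from the geometric inequality with $\langle\nu,\eta\rangle<0$. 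That sphere comparison is the missing quantitative ingredient in your ``continuity argument.'' Also, your plan to ``treat separately the case where $\{A=0\}$ has nonempty interior'' misreads the difficulty: the hard case is when $\{A=0\}$ separates but has empty interior, and Sacksteder's Lemma handles both cases uniformly.
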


Theorem~\ref{thm:R} is implied by the following more general theorem. Denote by $M_+$ a connected component of $\{p \in M: H \ge 0 \mbox{ at } p \}$ that contains a point of positive mean curvature. We say that the mean curvature $H$ \emph{changes signs through} $\Gamma$ if  $\Gamma$ is a connected component of $\partial M_+$ and $\Gamma$ intersects the boundary of a connected component of $M\setminus M_+$\footnote{Let $M\setminus M_+ = \sqcup_{\alpha} U_{\alpha}$ where $U_{\alpha}$ are connected components. Then 
\[
	\partial M_+ = \partial (M\setminus M_+) = \partial (\sqcup_{\alpha} U_{\alpha})  = \mbox{cl} (\cup_{\alpha} \partial U_{\alpha}), 
\]
where $\mbox{cl}(V)$ denotes the closure of a set $V$. If $M\setminus M_+$ has finitely many components, then $ \mbox{cl} (\cup_{\alpha} \partial U_{\alpha}) = \cup_{\alpha} \partial U_{\alpha}$ and $\Gamma$ clearly intersects with some $\partial U_{\alpha}$. If $M\setminus M_+$ has infinitely many components, there may exist a connected component of $\partial M_+$ that does not intersect $\partial U_{\alpha}$ for any $\alpha$ (cf. Remark~\ref{re:boundary-component}).}.

\begin{THM} \label{th:unbounded}
Let $n\ge 2$ and $M$ a complete embedded $n$-dimensional $C^{n+1}$ hypersurface in $\mathbb{R}^{n+1}$ with non-negative scalar curvature. Suppose that the mean curvature $H$ of $M$ changes signs. If $H$ changes signs through $\Gamma$, then $\Gamma$ must be unbounded.
\end{THM}

Let $A$ be the second fundamental form of $M$, let $H$ be the mean curvature of $M$, and let $R$ be the scalar curvature of $M$. Denote by $M_0 = \{ p \in M: A = 0 \mbox{ at } p\}$ the set of geodesic points. Throughout this article, we assume that the hypersurface $M$ is embedded and orientable. 

By the Gauss equation $R = H^2 - |A|^2$, if $R\ge 0$, $H$ can possibly vanish and change signs and if $H=0$ at a point, then $A=0$ at that point. This causes the main analytic difficulty to prove Theorem~\ref{th:unbounded}, as several natural geometric differential equations, including the linearized scalar curvature equation and scalar curvature flow, may be fully degenerate at points of zero mean curvature and cease to be globally elliptic or parabolic. Nevertheless, the set of points where $A=0$, denoted by $M_0$, has more structure  because the connected component of $M_0$ lies in a hyperplane \cite{S} ({\it cf}. Lemma~\ref{le:geodesic}). 

A new ingredient in our proof is that we consider the level sets of the height function defined by the hyperplane containing some subset of $M_0$. We derive a geometric inequality which relates the mean curvature and scalar curvature of $M$ to the mean curvature of the level sets  (Theorem \ref{th:HHR}). Therefore, the geometry of $M$ has some quantitative influence on the geometry of its level sets.  We then carefully investigate the level sets and apply the maximum principles to prove the key results Lemma~\ref{le:Hsign} and Theorem~\ref{th:zero}. Note that in~\cite{HW3} we generalize the geometric inequality to hypersurfaces in a larger class of ambient spaces, including the hyperbolic space and the spheres, and obtain other applications.


As an application of Theorem~\ref{thm:R}, we show that nonnegative scalar curvature is preserved by the mean curvature flow. 

\begin{THM} \label{th:MCF}
Let $n\ge 2$ and $M$ a closed embedded $n$-dimensional $C^{n+1}$ hypersurface in $\mathbb{R}^{n+1}$ with nonnegative scalar curvature. Let $\{ M_t\}$ be a solution to the mean curvature flow with initial hypersurface $M$. Then, the scalar curvature of $M_t$ is strictly positive for all $t>0$.
\end{THM}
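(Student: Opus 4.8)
The plan is to combine Theorem~\ref{thm:R} with the parabolic maximum principle applied to the evolution equation for the scalar curvature under mean curvature flow. First I would recall that under MCF the second fundamental form $A = (h_{ij})$ evolves by
\[
\partial_t h_{ij} = \Delta h_{ij} - 2 H h_{il} h^l_j + |A|^2 h_{ij},
\]
and hence the mean curvature $H = g^{ij} h_{ij}$ satisfies $\partial_t H = \Delta H + |A|^2 H$, while $|A|^2$ satisfies $\partial_t |A|^2 = \Delta |A|^2 - 2|\nabla A|^2 + 2|A|^4$. Since $2\sigma_2 = H^2 - |A|^2$ is (twice) the scalar curvature, a direct computation gives
\[
\partial_t(H^2 - |A|^2) = \Delta(H^2 - |A|^2) - 2|\nabla H|^2 + 2|\nabla A|^2 + 2|A|^2(H^2 - |A|^2).
\]
Because $|\nabla A|^2 \ge |\nabla H|^2$ (the gradient of the trace is controlled by the full gradient, via Kato's inequality / Cauchy–Schwarz), the reaction term is nonnegative whenever $H^2 - |A|^2 \ge 0$, so $2\sigma_2$ is a supersolution of a linear parabolic equation of the form $\partial_t u \ge \Delta u + c(x,t) u$ with $c = 2|A|^2 \ge 0$ as long as $u \ge 0$.

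Next I would invoke the hypotheses: $M$ is closed and has $2\sigma_2 \ge 0$ at $t=0$. By Theorem~\ref{thm:R}, $H$ has a sign on $M$, say $H \ge 0$ everywhere (the other case is symmetric, or one notes that the flow of $-M$-oriented data behaves the same way since $\sigma_2$ is independent of the choice of normal). The scalar curvature being nonnegative together with $H \ge 0$ means that initially $M$ is ``weakly $2$-convex'' in the relevant sense, and I would use this to run the maximum principle on the compact manifolds $M_t$ over a short time interval: the weak (or strong) parabolic maximum principle applied to the inequality above shows first that $2\sigma_2 \ge 0$ is preserved, and then the strong maximum principle (using that $M$ is connected — or arguing component by component) upgrades this to $2\sigma_2 > 0$ for all $t > 0$, unless $2\sigma_2 \equiv 0$ on some time slice. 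The residual case $2\sigma_2 \equiv 0$ must be ruled out: I would argue that if $H^2 = |A|^2$ identically on some $M_{t_0}$ with $t_0 > 0$, then tracking back the equality case in $|\nabla A|^2 \ge |\nabla H|^2$ and in the ODE comparison forces $A = \frac{H}{n} g$, i.e., $M_{t_0}$ is totally umbilic, hence a round sphere (by a classical theorem for compact hypersurfaces); but a round sphere shrinking under MCF has $2\sigma_2 = \frac{n-1}{n} H^2 > 0$, a contradiction. Alternatively, one observes that $H > 0$ is preserved (as $\partial_t H = \Delta H + |A|^2 H$ and $H \ge 0$, $H \not\equiv 0$ at $t=0$ by the strong maximum principle, unless $M$ is a hyperplane which is not closed), and then the strict positivity of $2\sigma_2$ for $t>0$ follows from the strong maximum principle applied directly to the supersolution together with $H>0$ forbidding $2\sigma_2$ from vanishing identically.

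The main obstacle I anticipate is \emph{not} the short-time argument but the bookkeeping needed to make the maximum-principle step rigorous when $2\sigma_2$ vanishes at $t=0$ on a possibly large set, and when $H$ vanishes somewhere at $t=0$ (which is exactly the borderline case of Theorem~\ref{thm:R}). At such points the linear coefficient structure is fine, but one must ensure the strong maximum principle is applied on connected time slices and that the set $\{2\sigma_2 = 0\}$ cannot persist for positive time; the cleanest route is to first establish $H > 0$ strictly for $t > 0$ via the (scalar, genuinely parabolic) equation for $H$ and the strong maximum principle — noting a closed hypersurface cannot be minimal, so $H \not\equiv 0$ initially — and only then deduce $2\sigma_2 > 0$. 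A secondary technical point is the regularity/short-time-existence input: with $C^{n+1}$ initial data one has a smooth solution for $t>0$ by standard parabolic regularization of MCF, so the evolution equations above hold classically on $(0,T)$ and the argument applies there, then one lets the initial time tend to $0$.
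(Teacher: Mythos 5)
Your setup is correct up to the point where you deduce the supersolution inequality, but the key estimate $|\nabla A|^2 \ge |\nabla H|^2$ is \emph{false} for $n\ge 2$, and this breaks the argument. Cauchy--Schwarz only gives $|\nabla A|^2 \ge \tfrac{1}{n}|\nabla H|^2$, and even using the full Codazzi symmetry of $\nabla A$ the sharp constant (Huisken's refinement) is $|\nabla A|^2 \ge \tfrac{3}{n+2}|\nabla H|^2$; both constants are strictly less than $1$ once $n\ge 2$, and the extremizing tensor $T_{kij} = \tfrac{1}{n+2}(E_k\delta_{ij} + E_i\delta_{jk} + E_j\delta_{ik})$ shows equality can occur. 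Thus the gradient term $-2|\nabla H|^2 + 2|\nabla A|^2$ in
\[
\partial_t(H^2 - |A|^2) = \Delta(H^2 - |A|^2) - 2|\nabla H|^2 + 2|\nabla A|^2 + 2|A|^2(H^2 - |A|^2)
\]
can be strictly negative, and $2\sigma_2 = H^2 - |A|^2$ is \emph{not} a supersolution of a linear parabolic inequality. Your conclusion that the reaction term keeps nonnegativity conflates the reaction term $2|A|^2(H^2 - |A|^2)$ (which is fine) with the gradient terms (which are not). The totally umbilic rigidity discussion is also a side issue: the real problem is that the maximum principle you want to invoke does not apply as stated.

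What rescues the argument — and is what the paper actually does — is exactly your ``backup plan,'' pushed one step further. One first uses Theorem~\ref{thm:R} to get $H\ge 0$ at $t=0$, then the standard strong maximum principle for $\partial_t H = \Delta H + |A|^2 H$ (noting a closed hypersurface cannot be minimal) to get $H>0$ for $t>0$. Now, instead of working with $H^2 - |A|^2$ directly, one works with the quotient $q_2 = R/(2H)$. Dividing by $H$ reorganizes the bad gradient terms into terms involving $\nabla q_2$ (which is allowed in a linear parabolic inequality for a maximum principle) plus a good-signed remainder; this is the computation in Huisken--Sinestrari, and the paper simply cites their Corollary 3.2, which asserts that the evolution equation of $q_2$ satisfies the parabolic strong maximum principle. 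From $q_2\ge 0$ at $t=0$ one then concludes $q_2>0$, hence $R>0$, for $t>0$. So your framework (first $H>0$, then $R>0$) is the right one, but the direct estimate you propose on $H^2 - |A|^2$ has a genuine gap, and the fix is the Huisken--Sinestrari quotient trick rather than a Kato-type inequality.
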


 Moreover, by using Theorem~\ref{th:unbounded}, we can provide a simple proof to Sacksteder's theorem for the case of closed hypersurfaces (see Theorem~\ref{th:Sa}). Our argument has further applications. For example, in~\cite{HW3}
 we prove the rigidity results for hypersurfaces  with boundary in the sphere whose scalar curvature is greater or equal to $n(n-1)$, parallel to our previous rigidity results~\cite{HW1} in non-positive space form. (We refer the reader to the excellent survey by Brendle~\cite{Brendle} and the references therein, for the recent rigidity results involving scalar curvature.) 
 
In contrast to the case of closed hypersurfaces, the mean curvature  of a non-closed hypersurface with nonnegative scalar curvature may change signs. For example, consider the $n$-dimensional graph in $\mathbb{R}^{n+1}$ defined by the function $f(x^1,\dots, x^n) = (x^n)^3$. The scalar curvature of the graph is zero, but its mean curvature is strictly positive when $x^n>0$ and strictly negative when $x^n<0$.  Nevertheless, we are able to generalize Theorem \ref{thm:R} to complete non-compact \emph{asymptotically flat} hypersurfaces  (see Definition~\ref{de:waf}). This condition is motivated by general relativity.

\begin{THM} \label{th:halfspace}
Let $n\ge 2$ and $M$ a complete embedded $n$-dimensional $C^{n+1}$ asymptotically flat hypersurface of countably many ends in $\mathbb{R}^{n+1}$ with scalar curvature $R\ge 0$. Then $H$ has a sign, i.e., either $H\ge 0$ or $H\le 0$ on $M$.
\end{THM}

Using Theorem~\ref{th:halfspace}, we prove the Riemannian positive mass theorem for asymptotically flat hypersurfaces for all $n\ge 2$. For three-dimensional asymptotically flat manifolds with nonnegative scalar curvature, the positive mass theorem was proved by Schoen--Yau~\cite{SY79, SY81} and Witten~\cite{W}. The proofs have been generalized to asymptotically flat manifolds of dimension $3 \le n \le 7$ or to spin manifolds of dimension $n\ge 3$. For higher dimensional non-spin manifolds, some approaches  have been announced by Lockhamp~\cite{Lo} and by Schoen~\cite{Schoen}. Recently, Lam~\cite{L} proved the positive mass inequality for graphical asymptotically flat hypersurfaces for all $n\ge 2$, \emph{without} the rigidity result. See Bray~\cite{Bray} for a thorough and up-to-date survey article on Riemannian positive mass theorem. Using Theorem~\ref{th:halfspace} and the geometric inequality (Theorem~\ref{th:HHR}), we generalize Lam's result to non-graphical hypersurfaces and obtain rigidity. 

\begin{THM} \label{thm:PMT}
Let $n\ge 2$ and $M$ a complete embedded $n$-dimensional $C^{n+1}$ asymptotically flat hypersurface  of countably many ends in $\mathbb{R}^{n+1}$ with nonnegative scalar curvature. Then the mass on each end is nonnegative. Moreover, if $M$ is connected and the mass of one end is zero, then $M$ is identical to a hyperplane. 
\end{THM}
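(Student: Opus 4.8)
The strategy is to reduce the positive mass statement to the graphical case handled by Lam~\cite{L}, using Theorem~\ref{th:halfspace} as the crucial structural input. By Theorem~\ref{th:halfspace}, the mean curvature $H$ of $M$ has a sign; after replacing the unit normal by its opposite if necessary, we may assume $H \le 0$ everywhere, i.e. $M$ is weakly mean convex. The plan is then to show that each regular end, being asymptotically flat and weakly mean convex at infinity, must in fact be graphical over a hyperplane, so that the mass integral on that end coincides (via the divergence-structure formula for the mass of a graph) with an integral of the scalar curvature, which is nonnegative. More precisely, fix an end $E$; by the definition of a regular end (Definition~\ref{de:waf}) it approaches a fixed hyperplane $\Pi$, and on $E$ the height function $u$ over $\Pi$ is well-defined for large radius. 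I would use the level-set analysis already developed for Theorem~\ref{th:unbounded} and Theorem~\ref{th:halfspace}: the sign of $H$ forces the level sets of the height function, hence the end itself, to be a genuine graph $x^{n+1} = u(x^1,\dots,x^n)$ over the exterior of a compact set in $\Pi$, with $u$ satisfying the asymptotically flat decay that makes the ADM-type mass well-defined.

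Once the end is known to be graphical, I would invoke Lam's identity: for an asymptotically flat graph of a function $u$, the mass of the end is a positive constant times
\begin{equation*}
\int_{\{x : x \in \Pi,\ |x| \ge \rho\}} \frac{R}{\sqrt{1+|\nabla u|^2}}\,dx \;+\; (\text{boundary term at } |x|=\rho),
\end{equation*}
where $R \ge 0$ is the scalar curvature. The interior piece of $M$ (the complement of the ends) contributes the boundary term at the finite radius $\rho$; here one must check that this boundary term has the right sign, which again follows from weak mean convexity together with the fact that the bounded part of $M$ caps off the graphical end. Summing, or rather working end by end, yields that the mass of each end is bounded below by a nonnegative quantity, hence nonnegative.

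For the rigidity statement, suppose $M$ is connected and the mass of some end $E_0$ is zero. Then the nonnegative integrand $R/\sqrt{1+|\nabla u|^2}$ integrates to zero over the graphical region of $E_0$, forcing $R \equiv 0$ there, and the vanishing boundary term forces the interior cap to be totally geodesic along the interface; I would then run a unique-continuation / analyticity argument, or more directly argue that $H \le 0$ and $R = 0$ together with the Gauss equation force the second fundamental form to vanish on an open set, and propagate this using connectedness and the rigidity case of Theorem~\ref{th:halfspace} (equivalently, of the level-set Lemma~\ref{le:Hsign}) to conclude that the second fundamental form vanishes identically, so $M$ is a hyperplane.

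The main obstacle I expect is the first step: rigorously upgrading "regular end with $H \le 0$" to "the end is a graph over a hyperplane with controlled asymptotics," so that Lam's formula applies verbatim. This requires combining the asymptotic flatness built into Definition~\ref{de:waf} with the global sign of $H$ to rule out the end folding back over itself — exactly the kind of level-set argument that powers Theorem~\ref{th:unbounded}, but now carried out uniformly near infinity and with enough decay control to make the mass integral converge and the boundary terms vanish. The rigidity case is comparatively routine once the equality case of that structural result is in hand.
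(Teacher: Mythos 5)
Your overall strategy — invoke Theorem~\ref{th:halfspace} to get a sign on $H$, then use the divergence-structure of the graphical scalar curvature to relate the mass to a nonnegative bulk integral plus a boundary term — is the same as the paper's, but you misidentify where the work actually is, and the key technical step is missing. First, the ``main obstacle'' you describe (upgrading a regular end to a graph over a hyperplane) is not an obstacle at all: Definition~\ref{de:waf} already requires each end to be a graph over the exterior of a bounded region in a hyperplane, and the paper imposes \emph{no} decay on $|Df|$, so you should not be trying to force ADM-type asymptotics to make Lam's identity ``apply verbatim''. Second and more seriously, if you cut at a fixed coordinate sphere $\{|x|=\rho\}$ as you propose, the boundary term has no reason to have a favorable sign — nothing you have controls the geometry there. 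The paper's crucial move (Lemma~\ref{le:mass-level-set}) is to cut along a small \emph{level set} $\Sigma=\{f=c\}$ of the graphing function: then the boundary term becomes $\int_{\Sigma} \frac{|Df|^2}{1+|Df|^2}\,H_{\Sigma}\,d\sigma$, and the sign of $H_{\Sigma}$ is exactly what Corollary~\ref{co:HHR} controls once you know the sign of $H$; the existence and positioning of such a level set (inward-pointing normal, outermost component) rests on Corollary~\ref{co:half-space}. Without that choice of cut, the proof does not close.

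Two further points. You also need a case analysis on $\lim_{|x|\to\infty}f$: the definition allows $f\to a$ finite, $f\to+\infty$, or $f\to-\infty$, and the last case has to be ruled out (the paper shows it contradicts compactness of a level set via Corollary~\ref{co:HHR}); your argument assumes the end is asymptotic to a plane. Finally, for rigidity your proposed unique-continuation/analyticity argument is heavier than what's needed: once $m=0$ forces both the bulk term and the boundary term to vanish, the boundary term $\int_{\Sigma}\frac{|Df|^2}{1+|Df|^2}H_{\Sigma}\,d\sigma = 0$ with $H_{\Sigma}\ge0$ gives $H_{\Sigma}\equiv0$ on a closed hypersurface $\Sigma\subset\mathbb{R}^n$, which is impossible by compactness — unless no such nonempty level set exists, i.e.\ $f\equiv 0$ and $M$ is the hyperplane.
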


Because our definition of \emph{asymptotical flatness} imposes  rather weak decay condition on the induced metric of the ends, the mass (Definition~\ref{de:mass}) may be $+\infty$. In Lemma~\ref{le:mass}, we show that the mass is finite and coincides with the classical definition of the ADM mass if the growth rate of the end is controlled. Our assumptions on the positive mass theorem are rather general and include interesting examples, such as $n$-dimensional Schwarzschild manifolds embedded in $\mathbb{R}^{n+1}$ with two ends (see  Example~\ref{ex:schwarzschild}). We remark that although hypersurfaces in $\mathbb{R}^{n+1}$ are spin, Theorem~\ref{thm:PMT} holds under more general asymptotics and does not seem to be a special case of the positive mass theorem for spin manifolds. 

Note that in~\cite{HW-Penrose}, we extend Theorem~\ref{th:halfspace} to asymptotically flat graphs \emph{with a minimal boundary}, which is a key ingredient to prove the equality case of the Penrose inequality in that setting.

The article is organized as follows. In Section~\ref{se:level}, we prove the geometric inequality (Theorem~\ref{th:HHR}). 
Section \ref{se:thm1} is the most technical part of this article. After establishing several analytical results for the mean curvature operator, we prove Theorem~\ref{th:unbounded}, and apply the results to the mean curvature flow. In addition, we give a shorter proof to Sacksteder's theorem for closed hypersurfaces. 
In Section~\ref{se:example}, we construct the examples of non-mean convex hypersurfaces satisfying $\sigma_k \ge 0$, for all $k\ge 3$. Theorem~\ref{th:halfspace} and Theorem~\ref{thm:PMT} are proven in Section~\ref{se:thm2}. Finally, we include some topological results used in the proof of Theorem~\ref{th:unbounded} in Appendix~\ref{se:appendix}.

\begin{ack}
We thank Brian White for discussions in an early stage of this work, and Huai-Dong Cao, Dan Lee and Xiao Zhang for  valuable comments after the first version appeared on arXiv. The first author thanks Rick Schoen and Mu-Tao Wang for their encouragement and acknowledges NSF grant DMS-$1005560$ and DMS-$1301645$ for partial support. The second author would like to thank The Ohio State University for support.
\end{ack}

\section{The mean curvature of the level sets} \label{se:level}
Let us begin with a linear algebra identity, which applies to a real matrix not necessarily being symmetric.
\begin{prop} \label{pr:id}
  Let $A = (a_{ij})$ be an $n \times n$ matrix with $n \ge 2$ and let $k\in \{ 1, 2, \dots, n\}$. Denote
\[
   \sigma_1(A) = \sum_{i=1}^n a_{ii}, \quad \sigma_1(A|k) = \sigma_1(A) - a_{kk}, \quad \sigma_2(A) = \sum_{1 \le i < j \le n} (a_{ii} a_{jj} - a_{ij}a_{ji}).
\]
  Then, we have
  \begin{equation} \label{eq:id1}
    \begin{split}
    \sigma_1(A) \sigma_1(A|k) 
    & = \sigma_2(A) + \frac{n}{2(n-1)} [\sigma_1(A|k)]^2 + \sum_{1 \le i < j \le n} a_{ij} a_{ji} \\
    & \quad + \frac{1}{2(n-1)} \sum_{1 \le i < j \le n \atop i\neq k, j\neq k } (a_{ii} - a_{jj})^2,
    \end{split}
  \end{equation}
  where the last term is zero when $n = 2$.
  In particular, if $A$ is real and $a_{ij}a_{ji} \ge 0$ for all $1 \le i< j \le n$, then
  \[
     \sigma_1(A) \sigma_1(A|k) \ge \sigma_2(A) + \frac{n}{2(n-1)} [\sigma_1(A|k)]^2
  \]
  with equality if and only if $a_{ii}$ are equal for all $i = 1, \dots, n$ and $i\neq k$, and $a_{ij} a_{ji} = 0$ for all $i,j = 1,\ldots, n$ and $i \ne j$.
\end{prop}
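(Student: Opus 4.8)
The plan is to reduce the identity \eqref{eq:id1} to an elementary statement about the diagonal entries alone. Write $t_i = a_{ii}$. The only way the off-diagonal entries enter \eqref{eq:id1} is through the terms $-\sum_{1\le i<j\le n}a_{ij}a_{ji}$ hidden inside $\sigma_2(A)$ and the explicit $+\sum_{1\le i<j\le n}a_{ij}a_{ji}$ on the right-hand side, and these cancel. Hence it suffices to prove the purely diagonal identity
\[
\Big(\sum_{i=1}^n t_i\Big)\Big(\sum_{i=2}^n t_i\Big) = \sum_{1\le i<j\le n} t_i t_j + \frac{n}{2(n-1)}\Big(\sum_{i=2}^n t_i\Big)^2 + \frac{1}{2(n-1)}\sum_{2\le i<j\le n}(t_i-t_j)^2 .
\]

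Next I would split off the index $1$. Setting $s = \sum_{i=2}^n t_i$, the left side equals $t_1 s + s^2$, while $\sum_{1\le i<j\le n}t_it_j = t_1 s + \sum_{2\le i<j\le n}t_it_j$, so the term $t_1 s$ cancels and the claim reduces to an identity in $t_2,\dots,t_n$ only,
\[
\Big(1 - \frac{n}{2(n-1)}\Big)s^2 = \sum_{2\le i<j\le n}t_it_j + \frac{1}{2(n-1)}\sum_{2\le i<j\le n}(t_i-t_j)^2 ,
\]
that is, using $1-\tfrac{n}{2(n-1)} = \tfrac{n-2}{2(n-1)}$ and multiplying by $2(n-1)$,
\[
(n-2)s^2 = 2(n-1)\sum_{2\le i<j\le n}t_it_j + \sum_{2\le i<j\le n}(t_i-t_j)^2 .
\]

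This last identity I would check by expanding both sides in the two symmetric functions $\sum_{i=2}^n t_i^2$ and $\sum_{2\le i<j\le n}t_it_j$: one has $s^2 = \sum_{i=2}^n t_i^2 + 2\sum_{2\le i<j\le n}t_it_j$, while $\sum_{2\le i<j\le n}(t_i-t_j)^2 = (n-2)\sum_{i=2}^n t_i^2 - 2\sum_{2\le i<j\le n}t_it_j$, the coefficient $n-2$ arising because among the $n-1$ indices $2,\dots,n$ each square $t_k^2$ occurs in exactly $n-2$ of the pairs $\{i,j\}$. Substituting shows both sides equal $(n-2)\big(\sum_{i=2}^n t_i^2 + 2\sum_{2\le i<j\le n}t_it_j\big)$. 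This small combinatorial count is the only point requiring care and is the closest thing to an obstacle here; everything else is bookkeeping, and the argument works uniformly for all $n\ge 2$, the case $n=2$ being the degenerate one where both the sum $\sum_{2\le i<j\le n}$ and the factor $n-2$ vanish.

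Finally, for the ``in particular'' clause: under the hypotheses $a_{ij}a_{ji}\ge 0$ for $i<j$, both omitted terms $\sum_{1\le i<j\le n}a_{ij}a_{ji}$ and $\sum_{2\le i<j\le n}(a_{ii}-a_{jj})^2$ are nonnegative, so dropping them from \eqref{eq:id1} yields the asserted inequality. Equality forces both to vanish; since each summand $a_{ij}a_{ji}$ is nonnegative, this gives $a_{ij}a_{ji}=0$ for every $i\ne j$, and the vanishing of $\sum_{2\le i<j\le n}(a_{ii}-a_{jj})^2$ gives $a_{22}=\cdots=a_{nn}$. Conversely, these two conditions plainly produce equality in \eqref{eq:id1}.
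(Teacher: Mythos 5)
Your proof is correct and follows essentially the same route as the paper's: after the off-diagonal entries cancel, both proofs reduce \eqref{eq:id1} to the elementary identity $\tfrac{n-2}{2(n-1)}\bigl(\sum_{j\ge 2}a_{jj}\bigr)^2-\sum_{2\le i<j\le n}a_{ii}a_{jj}=\tfrac{1}{2(n-1)}\sum_{2\le i<j\le n}(a_{ii}-a_{jj})^2$, which is exactly your reduced identity multiplied through by $2(n-1)$. The only difference is presentational: you make the off-diagonal cancellation explicit at the outset, whereas the paper lets it happen silently when substituting the expansion of $\sigma_2(A)$.
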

\begin{proof}
  Without loss of generality, we assume $k=1$. Note that
   \[
      \sigma_2(A) = a_{11}\sigma_1(A|1) + \sum_{2 \le i < j \le n} a_{ii} a_{jj} - \sum_{1 \le i <j \le n} a_{ij} a_{ji}. 
   \]
   Then,
   \begin{equation} \label{eq:id2}
     \begin{split}
      \sigma_1(A) \sigma_1(A|1) 
     & = a_{11} \sigma_1(A|1) + \sigma_1(A|1)^2 \\
     & = \sigma_2(A) + \sum_{1\le i <j \le n} a_{ij} a_{ji} + \sigma_1(A|1)^2 - \sum_{2 \le i < j \le n} a_{ii} a_{jj}.
   \end{split}
  \end{equation}
   Now \eqref{eq:id1} follows from applying 
   \[
   		\frac{n-2}{2(n-1)}\Big(\sum_{j=2}^n a_{jj}\Big)^2 - \sum_{2 \le i < j \le n} a_{ii} a_{jj} \\
     = \frac{1}{2(n-1)} \sum_{2 \le i < j \le n} (a_{ii} - a_{jj})^2
   \]
   to the last term on the right hand side of \eqref{eq:id2}.
\end{proof}



We shall adopt the following convention for the mean curvature. Let $N$ be a (piece of) hypersurface in Euclidean space. Let $\mu$ be a unit normal vector field to $N$. The mean curvature of $N$ defined by $\mu$ is given by 
\[
	H_N = - \mbox{div}_0 \mu,
\] 
where $\mbox{div}_0$ is the Euclidean divergence operator. By this convention, the $n$-dimensional sphere of radius $r$ has positive mean curvature $n/r$ with respect to the inward unit normal vector. We denote by $\langle \cdot, \cdot \rangle$ the Euclidean metric on $\mathbb{R}^{n+1}$, and denote by $\p_1,\ldots, \p_{n+1}$ the tangent vectors with respect to $(\mathbb{R}^{n+1}; x^1,\ldots, x^{n+1})$. For a $C^2$ function $f$, we abbreviate $f_i = \p f/\p x^i$, $f_{ij} = \p^2 f/ \p x^i \p x^j$, and denote $Df = (f_1,\ldots, f_n)$.  Let $\eta$ be a vector in $\mathbb{R}^n$. With a slight abuse of notation, we may sometimes view $\eta$ as a vector in $\mathbb{R}^{n+1}$ by letting the $(n+1)$th component be zero.

\begin{thm} \label{th:HHR}
Let $M$ be a $C^2$ hypersurface and let $h: M \to \mathbb{R}$ denote the height function $h(x^1, \dots, x^{n+1})= x^{n+1}$. Assume that $a$ is a regular value of $h$ and  $\Sigma = h^{-1}(a)$ with $| \nabla^M h | > 0$ on $\Sigma$. Denote by $\nu$ and $\eta$ the unit normal vector fields to $M \subset \mathbb{R}^{n+1}$ and $\Sigma \subset \mathbb{R}^n$, respectively; and denote by $H$ and $H_{\Sigma}$ the mean curvatures of $M\subset \mathbb{R}^{n+1}$ and $\Sigma \subset \mathbb{R}^n$ defined by $\nu$ and $\eta$, respectively. Let $R$ be the induced scalar curvature of $M$. Then,
\begin{equation} \label{eq:HHR}
	  \langle \nu, \eta \rangle H H_{\Sigma} \ge \frac{R}{2} + \frac{n}{2(n-1)} \langle \nu, \eta \rangle^2 H_{\Sigma}^2 \qquad \textup{on $\Sigma$}
\end{equation}
with equality at a point in $\Sigma$ if and only if 
$(M, \Sigma)$ satisfies the following two conditions at the point:
\begin{enumeratei}
  \item \label{it:S} $\Sigma \subset \mathbb{R}^n$ is umbilic, with the principal curvature $\kappa$;
  \item \label{it:M} $M \subset \mathbb{R}^{n+1}$ has at most two distinct principal curvatures, and one of them is equal to $\langle \nu, \eta \rangle \kappa$, with multiplicity at least $n-1$.
\end{enumeratei}
\end{thm}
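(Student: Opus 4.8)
The plan is to work at a single point $p\in\Sigma$ in a well-chosen coordinate frame and reduce the inequality \eqref{eq:HHR} to the linear-algebra statement in Proposition~\ref{pr:id}. First I would choose coordinates $(x^1,\dots,x^{n+1})$ on $\mathbb{R}^{n+1}$ so that $p$ is the origin, the tangent plane $T_pM$ is the horizontal hyperplane $\{x^{n+1}=0\}$ is \emph{not} the right normalization here — instead I want $M$ written locally as a graph $x^{n+1}=u(x^1,\dots,x^n)$ with $Du(0)=0$, and moreover I want to rotate the horizontal $x^1,\dots,x^n$ coordinates so that the level direction is standard: since $\Sigma=M\cap\{h=0\}=\{u=0\}$ and $\nabla^M h\neq 0$ at $p$, the gradient $Du$ restricted appropriately is nonzero, so after a rotation in the $(x^1,\dots,x^n)$-variables we may assume $\Sigma$ is tangent at $p$ to $\{x^1=0\}$, i.e. the outward $\mathbb{R}^n$-normal $\eta$ to $\Sigma$ points in the $x^1$-direction. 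With this setup, the second fundamental form of $M$ at $p$ is $(u_{ij}(0))_{1\le i,j\le n}$, the mean curvature is $H=-\sum_i u_{ii}(0)$ (up to the sign convention fixed before the theorem, to be tracked carefully), the scalar curvature is $R=2\sigma_2\big(u_{ij}(0)\big)$ by the Gauss equation, and $H_\Sigma$ is governed by the $(n-1)\times(n-1)$ block $(u_{ij}(0))_{2\le i,j\le n}$ together with a first-derivative correction coming from the fact that $\Sigma$ sits inside the hyperplane $\{x^{n+1}=0\}$ rather than inside $M$.

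The key computational step is to express $\langle\nu,\eta\rangle$ and $H_\Sigma$ intrinsically in terms of the Hessian of $u$ and of a defining function for $\Sigma$. Writing $\Sigma$ as a graph over its tangent line — solve $u(x^1,\dots,x^n)=0$ for $x^1=\varphi(x^2,\dots,x^n)$ near $p$ — one computes $H_\Sigma$ as the Euclidean mean curvature of the graph of $\varphi$, and the implicit function theorem gives $\varphi_{ij}(0)$ in terms of $u_{ij}(0)$ and $u_1(0)\ne 0$. Simultaneously $\langle\nu,\eta\rangle$ is the cosine of the angle between the $M$-normal and the $\mathbb{R}^n$-normal, which in these coordinates is a simple expression in $u_1(0)$. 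Substituting everything, the quantities $\langle\nu,\eta\rangle H$, $\langle\nu,\eta\rangle^2H_\Sigma^2$ and $R$ all become expressions in the entries $a_{ij}:=u_{ij}(0)$ of an $n\times n$ matrix $A$ — and the crucial point is that the off-diagonal products that appear satisfy $a_{ij}a_{ji}=a_{ij}^2\ge 0$ because $A$ is symmetric. I would then check that \eqref{eq:HHR} is \emph{exactly} the inequality
\[
\sigma_1(A)\,\sigma_1(A|1)\ \ge\ \sigma_2(A)+\frac{n}{2(n-1)}\big[\sigma_1(A|1)\big]^2
\]
from Proposition~\ref{pr:id}, after matching $\sigma_1(A|1)$ with $\langle\nu,\eta\rangle H_\Sigma$ and $\sigma_1(A)$ with $\langle\nu,\eta\rangle H$ (the geometric factors $\langle\nu,\eta\rangle$ being absorbed precisely by the coordinate choice). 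This matching is the heart of the proof and the step I expect to be most delicate: one must verify that the first-derivative terms in $H_\Sigma$ conspire with the factor $\langle\nu,\eta\rangle$ so that the identity is clean, with no leftover error terms; getting the sign conventions and the normalization of $\eta$ versus $\nu$ consistent is where the bookkeeping is easiest to get wrong.

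Finally, for the equality case, I would invoke the sharp statement of Proposition~\ref{pr:id}: equality holds iff $a_{22}=\cdots=a_{nn}$ and $a_{ij}=0$ for all $i\ne j$. Translating back: $a_{ij}=0$ for $i\ne j$ means $A$ is diagonal, so in particular the $(n-1)\times(n-1)$ block governing $\Sigma$ is a multiple of the identity, which forces $\Sigma$ to be umbilic at $p$ with some principal curvature $\kappa$ — this is condition \eqref{it:S}. The condition $a_{22}=\cdots=a_{nn}$ says $M$ has a principal curvature of multiplicity at least $n-1$ at $p$, the repeated value being $a_{22}$, while the remaining principal curvature is $a_{11}$; and by tracing through the implicit-function computation one identifies the repeated value with $\langle\nu,\eta\rangle\kappa$, which is condition \eqref{it:M}. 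Conversely, one checks directly that these two conditions make $A$ diagonal with the required repeated diagonal entry, hence force equality. The main obstacle throughout is purely the organization of the coordinate computation so that Proposition~\ref{pr:id} applies on the nose; there is no analytic difficulty once the algebraic identity is in hand.
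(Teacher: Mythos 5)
Your coordinate normalization is internally inconsistent, and the inconsistency is exactly where the delicacy of the proof lives. You ask for $M$ to be a graph $x^{n+1}=u$ with $Du(0)=0$ (so that the second fundamental form at $p$ is the symmetric Hessian $(u_{ij}(0))$), and \emph{simultaneously} you identify $\Sigma$ with $\{u=0\}$ and invoke $u_1(0)\neq 0$ via the implicit function theorem. These cannot both hold. If the graphing direction $x^{n+1}$ is the same direction used to define the height function $h$, then $h(x,u(x))=u(x)$ on $M$, so the hypothesis $|\nabla^M h|(p)>0$ forces $Du(p)\neq 0$; you are not free to normalize it to zero. If instead you rotate $\mathbb{R}^{n+1}$ so that $\nu(p)$ points in the new $x^{n+1}$-direction (which is what $Du(0)=0$ amounts to), then the hyperplane $\{h=0\}$ is tilted with respect to the new frame and $\Sigma$ is no longer $\{u=0\}$; computing $H_\Sigma$ then requires parametrizing $\Sigma$ as a graph \emph{inside the tilted hyperplane}, not inside $\{x^{n+1}=0\}$. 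Your write-up mixes the two frames, which is why your ``first-derivative correction'' simultaneously depends on $u_1(0)\neq 0$ and assumes $Du(0)=0$.

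This confusion also undermines your central simplification. You reduce to Proposition~\ref{pr:id} by asserting that the matrix is symmetric, so $a_{ij}a_{ji}=a_{ij}^2\ge 0$ is automatic. But in the generic case the paper does \emph{not} normalize $Du(p)=0$; it works directly with the shape operator in graph coordinates, $A^i_j=\partial_j\bigl(f_i/w\bigr)$, which is \emph{not} symmetric precisely because the coordinate basis is not orthonormal when $Df\neq 0$. That is the reason Proposition~\ref{pr:id} is stated for general (not necessarily symmetric) matrices, and the paper has to check $A^i_j A^j_i\ge 0$ by hand after rotating so $Df(p)=|Df|e_1$. You also omit the separate treatment of the degenerate direction $\langle\nu,\eta\rangle=1$ (the paper's Case~2), where $M$ cannot be graphed over $\{x^{n+1}=0\}$ at all and a different graphing direction is needed. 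The underlying idea of rotating so $\nu(p)=e_{n+1}$ and then computing $\Sigma$'s curvature inside the tilted plane $\{h=0\}$ can in fact be made to work, and it does yield a symmetric matrix and the clean identity $\langle\nu,\eta\rangle H_\Sigma=\sum_{\alpha\ge 2}u_{\alpha\alpha}(0)$, but that is a different computation from the one you describe, and it must be set up inside the tilted hyperplane rather than conflated with $\{x^{n+1}=0\}$.
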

\begin{pf}
It suffices to show \eqref{eq:HHR} at a point $p \in \Sigma$. We may assume $\langle \nu, \eta \rangle \ge 0$ at $p$. Otherwise, we can replace $\eta$ by $-\eta$. 
Let us divide the proof into two cases: 
  
  \textbf{Case 1}: Assume that $\langle \nu, \eta \rangle < 1$ at $p$.  Since $\Sigma = M \cap \{x^{n+1} = a\}$, we have in this case that $\langle \nu, \p_{n+1} \rangle \ne 0$ at $p$. Then, a neighborhood $V$ of $p$ in $M$ can be represented by
   \[
      x^{n+1} = f(x), \qquad \textup{for all $x = (x^1,\ldots, x^n) \in \Omega$},
   \]
   in which $\Omega \subset \{x^{n+1} = 0 \}$ is a small domain containing $p$, and $f \in C^{2}(\Omega)$. It follows that 
   \begin{equation} \label{eq:S1}
      \Sigma \cap V = \{ x \in \Omega \mid f(x) = a \}.
   \end{equation}
 We assume, without loss of generality, that $\langle \nu, \p_{n+1} \rangle > 0$ at $p$; then 
  \begin{equation} \label{eq:nu1}
     \nu = \frac{(-Df, 1)}{\sqrt{1+ |Df|^2} }  \qquad \textup{at $p$}.
  \end{equation}
   We remark that $|Df| \equiv \sqrt{f_1^2 + \cdots + f_n^2} > 0$ at $p$, for, by the construction we have
  \[
      h(x, f(x) ) = f(x) \qquad \textup{for all $x \in \Omega$};
  \]
  thus, $|\nabla^M h| > 0$ on $\Sigma$ implies that $|Df|> 0$ on $\Sigma \cap V$. 
  We can rotate the coordinates $(x^1,\ldots, x^n)$ in $\Omega$ so that at $p$
  \[
      f_1 = |Df|, \qquad \textup{and \quad $f_{\alpha} = 0$ \quad for all $2 \le \alpha \le n$}. 
  \]
  Then, the shape operator $A = (A^i_j)$ on $M \subset \mathbb{R}^{n+1}$ at $p$ is given by
  \[
     A^i_j = \frac{\p}{\p x^j}\Big(\frac{f_i}{w} \Big) = \frac{1}{w} \Big( f_{ij} - \frac{\delta_{i1} f_{1j} |Df|^2}{w^2} \Big),
  \]
  where $w \equiv \sqrt{1 + |Df|^2}$. Hence, at the point $p$,
  \begin{align}
     \sigma_1(A) & = H = \frac{f_{11}}{w^3} + \frac{1}{w}\sum_{\alpha \ge 2} f_{\alpha \alpha}, \notag\\
     \sigma_1(A|1) & =\sum_{\alpha \ge 2} A^{\alpha}_{\alpha} = \frac{1}{w}\sum_{\alpha \ge 2} f_{\alpha \alpha}. \label{eq:A1}
  \end{align}
   On the other hand, by \eqref{eq:S1} and \eqref{eq:nu1} we have $\eta = - Df/|Df|$ at $p \in \Sigma$; hence,
   \[
      \langle \nu, \eta \rangle = \frac{|Df|}{w} > 0 \qquad \textup{at $p$}.
   \]
   Furthermore, the shape operator $A_{\Sigma}$ on $\Sigma \subset \mathbb{R}^n$ is given by
   \begin{equation} \label{eq:AS}
				(A_{\Sigma})^i_j = \frac{\p}{\p x^j} \Big( \frac{f_i}{|Df|}\Big) = \frac{f_{ij}}{|Df|}, \qquad 2 \le i, j \le n.
   \end{equation}
   In particular, the mean curvature
   \begin{equation} \label{eq:HS}
       H_{\Sigma} = \sum_{i=2}^n (A_{\Sigma})^i_i = \frac{1}{|Df|}\sum_{\alpha =2}^n f_{\alpha \alpha}.
   \end{equation}
   Comparing \eqref{eq:A1} and \eqref{eq:HS} we have
   \[
     \sigma_1(A|1) = \frac{|Df|}{w} H_{\Sigma} = \langle \nu, \eta \rangle H_{\Sigma}.
   \]
   Now applying Proposition~\ref{pr:id} with $\sigma_2(A) = R/2$ yields
   \[
      \langle \nu, \eta \rangle H H_{\Sigma}  \ge \frac{R}{2} + \frac{n}{2(n-1)} (\langle \nu, \eta \rangle H_{\Sigma})^2.
   \]
   Here the equality holds if and only if
   \[
      f_{22} = \cdots = f_{nn}, \qquad \textup{and \quad $f_{ij} = 0$ \quad for all $i \ne j$},
   \]
   which, by \eqref{eq:AS}, is the same as that $(M,\Sigma)$ satisfies conditions \eqref{it:S} and \eqref{it:M} at $p$. This proves the result for Case 1.
   
   \textbf{Case 2}: Assume that $\langle \nu, \eta \rangle = 1$ at $p$. Then, $\nu = \eta$ at $p$; equivalently,
   \[
      \langle \nu, \p_{n+1} \rangle = 0.
   \]
   Let us assume, without loss of generality, that $\langle \nu, \p_1 \rangle \ne 0$. We can furthermore rotate the coordinates $(x^1,\ldots, x^n)$ so that 
   \[
      \nu = \p_1 \qquad \textup{at $p$}.
   \]
   Then, by the implicit function theorem, we can represent a neighborhood $U$ of $p$ in $M$ by
   \[
      x^1 = \psi(x^2, \ldots, x^n, x^{n+1}), \qquad \textup{for all $(x^2,\ldots, x^{n+1}) \in \Omega_1$,}
   \]
   where $\Omega_1 \subset \{x^1 = 0\}$ is a small domain containing $p$, and $\psi \in C^{2}(\Omega_1)$ satisfies that
   \begin{equation} \label{eq:dpsi}
      \psi_i(p) = 0 \qquad \textup{for all $2 \le i \le n+1$}.
   \end{equation}
   Since $\Sigma \subset \{x^{n+1} = 0\}$, $\Sigma \cap U$ is given by
   \[
      x^1 = \psi(x', 0) \qquad \textup{for all $(x',0) \equiv (x^2,\ldots, x^{n}, 0) \in \Omega_1$}.
   \]
    By construction above, we have
    \[
       \nu   = \frac{(1, - D' \psi, -\psi_{n+1})}{\sqrt{1 + |D' \psi|^2 + \psi_{n+1}^2}}, \quad \textup{and \quad $\eta  = \frac{(1, - D' \psi, 0)}{\sqrt{1 + |D' \psi|^2}}$},
    \]
    where $D' \psi = (\psi_2,\ldots, \psi_n)$. Using \eqref{eq:dpsi} we obtain the shape operator $A$ for $M \subset \mathbb{R}^{n+1}$ at $p$ 
    \[
       A^i_j = \frac{\p}{\p x^j} \left(\frac{\psi_i}{\sqrt{1 + |D'\psi|^2 + \psi_{n+1}^2 }} \right) = \psi_{ij}, \qquad 2 \le i,j \le n+1,
    \]
    while the shape operator $A_{\Sigma}$ for $\Sigma \subset \mathbb{R}^n$ at $p$ is 
    \begin{equation} \label{eq:AS2}
       (A_{\Sigma})^i_j = \frac{\p}{\p x^{j}}\left(\frac{\psi_i}{\sqrt{1 + |D'\psi|^2 }} \right) = \psi_{ij}, \qquad \textup{for all $2 \le i, j \le n$}.
    \end{equation}
    Hence, at $p$ the matrix $A_{\Sigma}$ is exactly the first $(n-1)\times (n-1)$ principal minor of the matrix $A$. It then follows from Proposition~\ref{pr:id} that
    \[
       H_{\Sigma} H \ge \frac{R}{2} + \frac{n}{2(n-1)} H_{\Sigma}^2,
    \]
    where ``='' holds if and only if 
    \[
       \psi_{22} = \cdots = \psi_{nn}, \qquad \textup{and \quad $\psi_{ij} = 0$ for all $i \ne j$},
    \]
    which is the same as that $(M,\Sigma)$ satisfies conditions \eqref{it:S} and \eqref{it:M} at $p$, by \eqref{eq:AS2}. This proves the result for Case 2. Combining the two cases, we finish the proof. \qed
\end{pf}

\begin{cor} \label{co:HHR}
  With the notations in Theorem~\ref{th:HHR}, if $R\ge 0$ on $M$, then
  \[
     \langle \nu, \eta \rangle H H_{\Sigma} \ge  \frac{n}{2(n-1)} \langle \nu, \eta \rangle^2 H_{\Sigma}^2 \qquad \textup{on $\Sigma$}.
  \]
  In particular, $\langle \nu, \eta \rangle  H H_{\Sigma} \ge 0$ at the point; in addition, if $H= 0$, then $H_{\Sigma} = 0$, and both $M$ and $\Sigma$ are geodesic at the point.
\end{cor}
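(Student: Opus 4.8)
The plan is to read off all three assertions directly from Theorem~\ref{th:HHR}. First, since $R\ge 0$, the term $R/2$ on the right-hand side of \eqref{eq:HHR} is nonnegative, so discarding it gives
\[
   \langle\nu,\eta\rangle H H_\Sigma \;\ge\; \frac{n}{2(n-1)}\langle\nu,\eta\rangle^2 H_\Sigma^2 \;\ge\; 0
   \qquad\textup{on }\Sigma,
\]
which is simultaneously the claimed inequality and the sign statement $\langle\nu,\eta\rangle H H_\Sigma\ge 0$ at the point.

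For the rigidity part, suppose $H(p)=0$ at a point $p\in\Sigma$. Then the left-hand side of \eqref{eq:HHR} vanishes at $p$, so that $0\ge R/2+\frac{n}{2(n-1)}\langle\nu,\eta\rangle^2 H_\Sigma^2$ there; as both summands on the right are nonnegative, each must vanish at $p$, in particular $R(p)=0$. To pass from this to $H_\Sigma(p)=0$ I need $\langle\nu,\eta\rangle\ne 0$ on $\Sigma$. This is immediate from the two cases in the proof of Theorem~\ref{th:HHR}, where one finds $\langle\nu,\eta\rangle=|Df|/w>0$ or $\langle\nu,\eta\rangle=1$; more invariantly, $\p_{n+1}$ and $\eta$ span the orthogonal complement of $T_p\Sigma$ in $\mathbb{R}^{n+1}$, the unit normal $\nu$ lies in this plane, and $|\nabla^M h|>0$ on $\Sigma$ forces $\nu$ not to be parallel to $\p_{n+1}$, hence to have a nonzero $\eta$-component. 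Therefore $H_\Sigma(p)=0$.

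Finally, with $H(p)=R(p)=H_\Sigma(p)=0$, both sides of \eqref{eq:HHR} equal $0$ at $p$, so equality holds and the characterization in Theorem~\ref{th:HHR} applies. By \eqref{it:S}, $\Sigma$ is umbilic at $p$ with some principal curvature $\kappa$, and $0=H_\Sigma(p)=(n-1)\kappa$ forces $\kappa=0$, so the second fundamental form of $\Sigma$ vanishes at $p$. For $M$, the cleanest route is the Gauss equation: if $\kappa_1,\dots,\kappa_n$ are the principal curvatures of $M$ at $p$, then $\sum_i\kappa_i^2=H^2-R=0$ at $p$, so all of them vanish; (one may also argue via \eqref{it:M}, since the principal curvature equal to $\langle\nu,\eta\rangle\kappa=0$ has multiplicity at least $n-1$, and the remaining principal curvature equals $H(p)=0$). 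Hence both $M$ and $\Sigma$ are geodesic at $p$. I do not anticipate a genuine obstacle; the only point that deserves to be spelled out is the non-vanishing of $\langle\nu,\eta\rangle$ on $\Sigma$, which is precisely what legitimizes the implication $H=0\Rightarrow H_\Sigma=0$.
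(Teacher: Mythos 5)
Your proof is correct and is exactly the derivation the paper has in mind (the corollary is stated without proof as an immediate consequence of Theorem~\ref{th:HHR}). The one step that genuinely requires an observation is $\langle\nu,\eta\rangle\neq 0$ on $\Sigma$, which you justify correctly both from the explicit formulas in the two cases of the theorem's proof and invariantly from $|\nabla^M h|>0$; and your Gauss-equation argument that $H=0$, $R=0$ forces $M$ geodesic matches the reasoning the authors use later (just after Corollary~\ref{co:HHR}, where they note $H^2-|A|^2=R\ge 0$ implies points with $H=0$ are geodesic).
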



\section{Complete hypersurfaces with nonnegative scalar curvature} \label{se:thm1}

Let $f$ be a $C^2$ function defined over an open set in $\mathbb{R}^n$. The upward unit normal vector of the graph of $f$ is 
\begin{align} \label{eq:nu}
	\nu = \frac{( -Df, 1) }{\sqrt{1+ |Df|^2}}.
\end{align}
The mean curvature operator is defined by
\[
	H(f) : = - \mbox{div}_0 \nu= \sum_{i,j=1}^n \left( \delta_{ij} - \frac{f_i f_j}{ 1+ |Df|^2} \right) \frac{f_{ij} }{ \sqrt{1+|Df|^2}}.
\]
By this convention, the mean curvature of the lower semi-sphere has positive mean curvature with respect to the upward unit normal vector. 

\begin{prop} \label{pr:nonempty} 
Let $W$ be an open subset in $\mathbb{R}^n$, not necessarily bounded. Let $p\in \p W$, and denote by $B(p)$ an open ball in $\mathbb{R}^n$ centered at $p$. Suppose $f \in C^2(  W\cap B(p)) \cap C^1( \overline{W} \cap B(p))$ satisfies 
\begin{align*}
	& H(f) \ge 0\quad &&\mbox{in } W  \cap B(p)   \\
	&f= c, \; |Df|=0\quad &&\mbox{on } \p W  \cap B(p),
\end{align*}	
for some constant $c$. Then either $f\equiv c$ in $W  \cap B(p)$, or
\[
	\{ x \in  W  \cap B(p): f(x) > c \} \neq \emptyset.
\]	
\end{prop}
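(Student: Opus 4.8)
The plan is to argue by contradiction using the strong maximum principle for the quasilinear mean curvature operator, in a form that allows the boundary datum to be attained in a degenerate way. Suppose $f \not\equiv c$ in $W \cap B(p)$ and yet $f \le c$ everywhere in $W \cap B(p)$. Since $f = c$ on $\partial W \cap B(p)$, one can extend $f$ by the constant $c$ across $\partial W \cap B(p)$; because $f \in C^1(\overline{W}\cap B(p))$ and $|Df| = 0$ on $\partial W \cap B(p)$, the extension $\tilde f$ defined by $\tilde f = f$ on $\overline W \cap B(p)$ and $\tilde f = c$ on $B(p)\setminus W$ is a $C^1$ function on $B(p)$. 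The first task is to verify that $\tilde f$ is a weak (viscosity, or $W^{1,\infty}$ distributional) supersolution of $H(\tilde f) \ge 0$, equivalently $H(\tilde f)\le 0$ in the supersolution sense, on all of $B(p)$: on $W\cap B(p)$ this is the hypothesis, on the open set where $\tilde f \equiv c$ it is trivial since $H(c) = 0$, and along $\partial W \cap B(p)$ the condition $|Df| = 0$ is exactly what is needed so that no positive distributional mass concentrates on the interface — the vector field $\nu$ of \eqref{eq:nu} matches up to be $(0,\dots,0,1)$ from both sides, so $-\dive_0 \nu$ has no singular part there.

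Next I would apply the strong maximum principle. The function $\tilde f$ satisfies $\tilde f \le c = \max \tilde f$ on $B(p)$ and attains its maximum at every point of $\partial W \cap B(p)$, which is a nonempty set in the interior of $B(p)$ (since $p \in \partial W$ and $B(p)$ is a ball centered at $p$). The operator $H$ is, for $C^1$ functions with bounded gradient, uniformly elliptic on the relevant range of gradients (the matrix $\delta_{ij} - f_if_j/(1+|Df|^2)$ has eigenvalues between $(1+|Df|^2)^{-1}$ and $1$), so the strong maximum principle for quasilinear equations — in the form applicable to $C^1$ distributional supersolutions, e.g. the version of Trudinger or the Hopf-type lemma for the minimal surface operator — forces $\tilde f$ to be constant in the connected component of $B(p)$ containing that interior maximum point, hence $\tilde f \equiv c$ near $\partial W \cap B(p)$ on the $W$-side, and then by unique continuation / connectedness $f \equiv c$ on $W\cap B(p)$, contradicting the assumption. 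Therefore either $f\equiv c$ or $f$ exceeds $c$ somewhere.

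I expect the main obstacle to be the rigorous justification of the maximum principle in this degenerate setting: $f$ is only assumed $C^2$ inside $W$ and $C^1$ up to the boundary, so one cannot directly quote the classical $C^2$ strong maximum principle, and one must instead work with the weak formulation and check carefully that the extension $\tilde f$ is genuinely a supersolution across the interface $\partial W\cap B(p)$ — this is where the hypothesis $|Df| = 0$ on $\partial W \cap B(p)$ is essential and must be used to rule out a distributional term supported on the (possibly irregular) set $\partial W\cap B(p)$. An alternative, perhaps cleaner, route that sidesteps some of this is to avoid extending and instead run a Hopf-boundary-point-type comparison argument directly inside $W\cap B(p)$: pick a point $q\in W\cap B(p)$, a small ball $B_\rho(q)\subset W\cap B(p)$ tangent to $\partial W$ at some boundary point $q_0$, and compare $f$ on $B_\rho(q)$ with an explicit radial subsolution of the mean curvature equation; the vanishing of $Df$ at $q_0$ is then incompatible with the strict normal-derivative inequality that the Hopf lemma would produce, unless $f$ is constant. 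Either way, the whole point is purely local and one-sided, and no scalar curvature or hypersurface geometry enters — Proposition~\ref{pr:nonempty} is a pure PDE lemma about the mean curvature operator that will later be fed the geometric input from Corollary~\ref{co:HHR}.
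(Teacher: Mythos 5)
Your second ``alternative'' route is essentially the paper's argument: an interior tangent sphere at a point of $\partial W \cap B(p)$ combined with the Hopf boundary lemma for the mean curvature operator, and the contradiction comes from $|Df|=0$ at the touching point. However, as written it skips the preliminary step. Before the Hopf lemma can produce a \emph{strict} normal-derivative inequality at $q_0$, one needs to know $f < c$ strictly inside the comparison ball $B_\rho(q)$ --- under your standing assumption you only know $f\le c$, so $f$ could equal $c$ at interior points. The paper first applies the \emph{interior} strong maximum principle to $H(f)\ge 0$: if $f=c$ at any interior point of $W\cap B(p)$, then $f\equiv c$ there, which is excluded; hence $f<c$ strictly, and only then is the Hopf lemma applicable. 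Your phrase ``unless $f$ is constant'' gestures at this, but the step should be explicit because it is exactly where the nontrivial interior elliptic input enters.

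Your first route --- extend $f$ by the constant $c$ to $\tilde f\in C^1(B(p))$ and invoke a strong maximum principle for weak (sub)solutions --- is genuinely different from the paper's and, if completed, would be a clean alternative. But the hinge of that route is the claim that $\tilde f$ is a weak subsolution of $H(\tilde f)\ge 0$ across $\partial W\cap B(p)$ because ``no positive distributional mass concentrates on the interface,'' and this is not automatic. What you do have is that the flux field $V=D\tilde f/\sqrt{1+|D\tilde f|^2}$ is continuous on $B(p)$ and vanishes on $\partial W\cap B(p)$; what you need is $\int_{B(p)} V\cdot D\phi\le 0$ for all $\phi\ge 0$ in $C^\infty_c(B(p))$, i.e., that $\dive_0 V$ (a priori only a distribution) has no negative singular part on $\partial W$. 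Since the proposition deliberately assumes no regularity on $\partial W$ (see the remark following it in the paper), $\partial W\cap B(p)$ could have infinite or non-$\sigma$-finite $\mathcal{H}^{n-1}$-measure, or positive Lebesgue measure, so the natural exhaustion/flux-estimate arguments (coarea on level sets of $|Df|$, or integration over tubular neighborhoods of $\partial W$) do not obviously close without additional control on $|D^2 f|$ near $\partial W$, which $f\in C^2(W)\cap C^1(\overline W)$ does not give. This is precisely the sort of boundary pathology the paper's interior-sphere argument sidesteps: the comparison ball lives entirely inside $W$, and no regularity of $\partial W$ is ever used. I would either adopt the paper's two-step (interior strong maximum principle, then Hopf) argument outright, or, if you want to keep the extension idea, isolate and prove the required removability statement for the singular part of $\dive_0 V$ as a separate lemma.
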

\begin{remark}
Notice that we impose no hypothesis on regularity of $\p W$ here and below. In particular, $\partial W$ need not be a hypersurface.
\end{remark}
\begin{pf}
Without loss of generality, we can assume $c=0$. We prove by contradiction. Suppose  $f \le 0$ and $f$ is not identically zero on $W\cap B(p)$. If $f = 0$ at  $x_0 \in W \cap B(p)$, then $x_0$ is a local maximum of $f$. This contradicts the strong maximum principle applied to $H(f)\ge 0$. Thus, $f$ must be strictly negative in $W\cap B(p)$. Because $W\cap B(p)$ is open,  
we can take a smaller open ball $B$ contained in $ W\cap B(p)$ such that $\p B$ touches $\p W \cap B(p)$ at a point $q$; in other words, 
$\partial W$ satisfies an interior sphere condition at $q \in \partial W$. Furthermore, $f < 0$ on $B$ and $f(q) = 0$.  Applying the Hopf boundary lemma to the mean curvature operator $H(f)$ yields that $|Df|(q) \neq 0$, which contradicts with the assumption that $|Df|(q) = 0$ for $q \in \p W \cap B(p)$.
\qed
\end{pf}

\begin{defi} \label{de:convex-point}
Let $W$ be a subset in $\mathbb{R}^n$. A point $p\in \partial W$ is called a convex point of $W$, if there exists an $(n-1)$-dimensional sphere $S$ in $\mathbb{R}^n$ passing through $p$ so that  $\overline{W}\setminus \{ p \}$ is contained in the open ball enclosed by $S$. 
\end{defi}

\begin{remark} \label{re:bounded}
A bounded subset in $\mathbb{R}^n$ has convex points. 
\end{remark}
\begin{lemma} \label{le:Hsign}
Let $W$ be an open subset in $\mathbb{R}^n$. Suppose  $p \in \p W$ is a convex point of $W$. Denote by $B(p)$ an open ball in $\mathbb{R}^n$ centered at $p$. Suppose  $f\in C^2 (W\cap B(p))\cap C^1 ( \overline{W} \cap B(p))$, $f = c, |Df|=0$ on $\p W \cap B(p)$ for some constant $c$. Suppose that almost every real number is a regular value of $f$. If the scalar curvature of the graph of $f$ is nonnegative and $H(f)\ge 0$, then $f\equiv c$ on $\widetilde{B}(p) \cap W$ where $\widetilde{B}(p)$ is an open ball with center $p$ of radius $d_0$ for any $d_0 < \sup \{ d(P, W\cap \partial B(p)):  \mbox{ $P$ is a hyperplane satisfying $P \cap \overline{W} = \{p\}$}\}$, where $d(\cdot, \cdot)$ denotes the Euclidean distance.

As a corollary, if $W$ is bounded,  $f\in C^{n+1}(W\cap N) \cap C^1 ( \overline{W} \cap N)$ for some open set $N$ containing $\partial W$, $f = c, |Df|=0$ on $\p W$, and the graph of $f$ has non-negative scalar curvature and non-negative mean curvature, then $f\equiv c$ in $ \overline{W} \cap N$.

\end{lemma}
\begin{figure}[top] 
   \centering
   \includegraphics[width=0.5\textwidth]{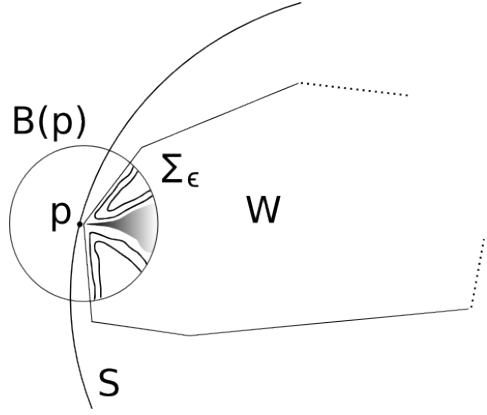}
   \caption{As in the proof of Lemma~\ref{le:Hsign}, $W$ is an open subset in $\mathbb{R}^n$, $p$ is a convex point of $W$, and $S$ is an $(n-1)$-sphere through $p$. For any small ball $B(p)$ centered at $p$, $\{ x \in W \cap B(p): f(x) > c \} \neq \emptyset$, unless $f\equiv c$. Hence the level set $\Sigma_{c+\epsilon}$ of $f$ is nonempty for $\epsilon > 0$ small. The shaded region represents the set $\{x\in W \cap B(p): f<c\}$ (possibly empty). }
\end{figure}
\begin{pf}
By translating the graph if necessary, it suffices to prove the lemma for $c=0$. Suppose to the contrary that $f\not\equiv 0 $ on $\widetilde{B}(p) \cap W$ for any open ball $\widetilde{B}(p)$. Because $p$ is a convex point, there exists an $(n-1)$-dimensional sphere $S$ through $p$ so that $\overline{W} \setminus \{ p \}$ is contained in the open ball enclosed by $S$.  Let the distance between $S$ and $W \cap \partial B(p)$ be $d_0$. Let $\widetilde{B}(p)$ be the open ball centered at $p$ of radius $d_0$. Consider the level sets of $f$ inside $W\cap B(p)$
\[
	\Sigma_{\epsilon} = \{ x \in W \cap B(p): f(x) =\epsilon \}.
\]
By Proposition \ref{pr:nonempty}, $\Sigma_{\epsilon}$ is non-empty in $W\cap \widetilde{B}(p)$ for all $\epsilon>0$ sufficiently small. 
Since almost every real number is a regular value of $f$, $\Sigma_{\epsilon}$ is a $C^2$ hypersurface in $\mathbb{R}^n$  for almost every $\epsilon$. 

We pick a sufficiently small $\epsilon$ so that $|Df|$ does not vanish on $\Sigma_{\epsilon}$, and $\Sigma_{\epsilon}$ intersects $\widetilde{B}(p)$.  Now we continuously translate the $(n-1)$-sphere $S$ toward $\Sigma_{\epsilon}$ along its inward normal at $p$, until it begins to intersect $\Sigma_{\epsilon}$ for the first time. Denote by $S'$ the resulting $(n-1)$-sphere. Then, $S'$ must be tangent to $\Sigma_{\epsilon}$ at an interior point $x_0$ because either $\Sigma_{\epsilon}$ is closed or  $\partial \Sigma_{\epsilon}$ is nonempty and is contained in $\overline{W} \cap \partial B(p)$.  Let $\eta =  Df/ |Df|$ be the normal vector to $\Sigma_{\epsilon}$ in $\mathbb{R}^n$ and let $H_{\Sigma_{\epsilon}}$ be the mean curvature of $\Sigma_{\epsilon}$ with respect to $\eta$. Note that because $f = 0$ on $\p W \cap B(p)$ and $x_0$ is the intersection of $\Sigma_{\epsilon}$ and the $(n-1)$-sphere $S'$ for the first time, $\eta$ at $x_0$ is pointing inward. By comparison principle, $H_{\Sigma_{\epsilon}} > 0$ at  $x_0$.  On the other hand, since the scalar curvature of the graph of $f$ is nonnegative, it follows from Corollary \ref{co:HHR} that
\[
	\langle \nu, \eta \rangle H H_{\Sigma_{\epsilon}} \ge 0.
\]
By \eqref{eq:nu}, $\langle \nu, \eta \rangle < 0$ on $\Sigma_{\epsilon}$. Therefore, $H_{\Sigma_{\epsilon}} \le 0$ at $x_0$. This leads to a contradiction.
\qed
\end{pf}

Let us recall the following characterization of the set of geodesic points, due to Sacksteder~\cite{S}. For the sake of completeness, we include his proof. 
\begin{lemma}\label{le:geodesic} 
Suppose  $M$ is a $C^{n+1}$ hypersurface in $\mathbb{R}^{n+1}$. Denote by $M_0 = \{ p \in M: A = 0 \mbox{ at } p\}$ the set of geodesic points. Let $M_0'$ be a connected component of $M_0$. Then $M_0'$ lies in a hyperplane which is tangent to $M$ at every point in $M_0'$. 
\end{lemma}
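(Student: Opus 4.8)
The plan is to show that the Gauss map is locally constant on $M_0'$, so that $M_0'$ lies in an affine hyperplane orthogonal to that fixed normal direction. Fix a point $q_0 \in M_0'$ and let $\nu$ denote the unit normal field of $M$. Since $M_0'$ is connected, it suffices to prove that every point of $M_0'$ has a neighborhood (in $M_0'$) on which $\nu$ is constant; then the set where $\nu = \nu(q_0)$ is open and closed in $M_0'$, hence all of $M_0'$, and $\langle x - q_0, \nu(q_0)\rangle$ is constant along $M_0'$, giving the hyperplane $\Pi = \{x : \langle x - q_0, \nu(q_0)\rangle = 0\}$. That $\Pi$ is tangent to $M$ at each point of $M_0'$ is then immediate, since the tangent plane of $M$ at a point $q$ is the orthogonal complement of $\nu(q) = \nu(q_0)$.

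To see that $\nu$ is locally constant on $M_0'$, write $M$ locally as a graph $x^{n+1} = u(x^1,\dots,x^n)$ near $q_0$, with $Du(q_0) = 0$ after a rotation; this is where the $C^{n+1}$ hypothesis enters. The key point is that $A = 0$ at $q \in M_0$ forces all first derivatives of $Du$ to vanish at such $q$; iterating this kind of argument, one shows (this is Sacksteder's lemma) that at an interior geodesic point all partial derivatives of $u$ of orders $2,3,\dots,n$ vanish. The cleanest way to organize the induction: suppose $q \in M_0'$ and all derivatives of $u$ up to order $m \le n$ vanish on $M_0'$ near $q$ but some derivative of order $m+1$ is nonzero at $q$; differentiating the identity $|A|^2 = 0$ (equivalently $H^2 - |A|^2$ together with $H = 0$, or directly the vanishing of the Hessian of $u$ on $M_0'$) in directions tangent to $M_0'$ and controlling the off-$M_0'$ behavior leads, via the structure of the second fundamental form of a graph, to a contradiction with $R \ge 0$ not being needed — only the pointwise vanishing $A \equiv 0$ on $M_0'$ is used. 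Once one knows the Hessian $D^2 u$ vanishes to order $n-1$ along $M_0'$, a Taylor expansion in the directions transverse to $M_0'$ inside $M$, combined with $A \equiv 0$ on $M_0'$, shows $D^2 u \equiv 0$ in a full neighborhood in $M$ of each point of $M_0'$ — wait, that is too strong; rather it shows $Du$ is constant (hence $\nu$ is constant) along $M_0'$ itself, which is all we need.

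I would therefore structure the proof in three steps: (1) reduce to showing $\nu$ is locally constant along $M_0'$ and deduce the hyperplane statement; (2) in graphical coordinates, prove by induction on $m = 2,\dots,n$ that all $m$-th order derivatives of $u$ vanish at every point of $M_0'$ near $q_0$, using at each stage that $A = 0$ on $M_0'$ and a Taylor/unique-continuation-type argument along directions in $M_0'$; (3) conclude that $Du$ is locally constant on $M_0'$, hence $\nu$ is, and finish as in step (1).

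The main obstacle is step (2): controlling how the second fundamental form of the graph behaves in directions transverse to $M_0'$ but tangent to $M$, and making the inductive bookkeeping of vanishing derivatives precise. The subtlety is that $M_0'$ may be a lower-dimensional set of complicated shape inside $M$, so "differentiating along $M_0'$" must be interpreted carefully (e.g.\ via curves lying in $M_0'$, or by a stratification argument), and the exponent $n$ in the regularity hypothesis is exactly what is consumed by the $n-1$ successive differentiations. This is precisely the delicate part of Sacksteder's original argument, and I would follow it closely rather than attempt a shortcut.
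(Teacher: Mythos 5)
Your overall plan—show that the Gauss map $\nu$ is constant on $M_0'$, then deduce the hyperplane statement—matches the paper's structure, but the way you propose to carry out each step has genuine gaps, and the paper's actual argument (which it attributes to Sacksteder) is quite different from the inductive bookkeeping you describe.

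First, your proposed induction in step (2) is false as stated. You claim that at a geodesic point of $M_0'$ all partial derivatives of $u$ of orders $2$ through $n$ must vanish, and that a nonzero derivative of order $m+1$ (with lower orders vanishing) would lead to a contradiction "only using $A \equiv 0$ on $M_0'$." Consider $u(x^1,x^2) = (x^1)^3$ on $\mathbb{R}^2$. The Hessian of $u$ is $\mathrm{diag}(6x^1, 0)$, so $M_0 = M_0'$ is the $x^2$-axis, a connected $1$-dimensional set. On $M_0'$ all second derivatives vanish, but $u_{111} = 6 \neq 0$ everywhere on $M_0'$. Yet there is no contradiction: this is a perfectly good $C^\infty$ hypersurface whose geodesic set is the whole $x^2$-axis, and $\nu$ is indeed constant there (since $Du = (3(x^1)^2, 0)$ vanishes on $M_0'$). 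So the higher-order derivatives of $u$ need not vanish along $M_0'$, and the inductive step you rely on does not go through. The conclusion you want (that $Du$, hence $\nu$, is constant on $M_0'$) is true but does not follow from vanishing of higher derivatives of $u$.

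Second, even granting that $\nu$ is constant on $M_0'$, the step in your step (1) where you assert that $\langle x - q_0, \nu(q_0)\rangle$ is constant along $M_0'$ is not justified and is actually the most delicate point. Knowing that $\nu \equiv \nu_0$ on $M_0'$ means $M_0'$ is contained in the critical set of the $C^{n+1}$ function $\varphi(y) = \langle \nu_0, x(y) - x(q_0)\rangle$ in local coordinates on $M$; it does not by itself force $\varphi$ to be constant on $M_0'$, because $M_0'$ need not be a nice submanifold (it can be any closed set, and "differentiating along it" is not meaningful in general). This is exactly where the paper invokes a nontrivial theorem of A.~P.~Morse (the $C^n$ version of "a function is constant on connected subsets of its critical set"), and the $C^{n+1}$ regularity hypothesis on $M$ is what makes $\varphi$ regular enough for Morse's theorem to apply. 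Your final paragraph flags this subtlety but does not resolve it.

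The paper's route is cleaner and avoids derivative induction entirely. Since $M$ is $C^{n+1}$, the Gauss map $\nu: M \to \mathbb{S}^n$ is $C^n$ and has rank zero on $M_0$. By Sard's theorem (in the quantitative Hausdorff-dimension form), $\nu(M_0)$ has Hausdorff dimension zero, hence is totally disconnected, hence $\nu(M_0')$ is a single point $\nu_0$. Then Morse's theorem, applied to the height function $\varphi$ in the direction $\nu_0$, gives $\varphi$ constant on the connected set $M_0'$, which is the hyperplane statement. Both theorems you would need are precisely designed to cope with the fact that the critical set may be geometrically wild, which is the obstacle your approach acknowledges but does not overcome. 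I recommend replacing steps (2) and (3) with the Sard/Morse argument.
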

\begin{pf}
We consider the Gauss map $\nu: M \rightarrow \mathbb{S}^n$. Since $M$ is of $C^{n+1}$, the Gauss map is of $C^n$. Note that the Gauss map $\nu$ has rank zero at any geodesic point. We can then apply a theorem of Sard~\cite[p. 888, Theorem 6.1]{Sard} to obtain that the image $\nu(M_0)$ is a one-dimensional Hausdorff measure zero set in $\mathbb{S}^n$. It follows that $\nu(M_0)$ is totally disconnected in $\mathbb{S}^n$. Thus, $\nu(M_0')$ consists of a single point in $\mathbb{S}^n$, denoted by $\nu_0$.

It remains to show that $M_0'$ lies in a hyperplane which is orthogonal to $\nu_0$. Pick a point $p_0 \in M_0'$. Let $(V; y^1,\ldots,y^n)$ be a local coordinate chart centered at $p_0$ in $M$, and define
\[
   \varphi(y) = \langle \nu_0, x(y) - x(0) \rangle, \qquad \textup{for each $y = (y^1,\ldots,y^n) \in V$}.
\] 
Here $x = (x^1,\ldots, x^{n+1})$ is the coordinates in $\mathbb{R}^{n+1}$, and $\langle \cdot, \cdot \rangle$ is the Euclidean metric on $\mathbb{R}^{n+1}$.
Then, the function $\varphi \in C^{n+1}(V)$, and by our construction, 
\[
   M_0' \cap V \subset \{ y \in V \mid \frac{\p \varphi}{\p y^i}(y) = 0, \; i = 1,\ldots,n \}.
\]
It follows from a theorem of A. P. Morse~\cite[p. 70, Theorem 4.4]{APM} that $\varphi$ is a constant on $M_0' \cap V$; thus, $\varphi \equiv 0$ on $M_0' \cap V$. Since $M_0'$ is connected, 
\[
   \langle \nu_0, x(p) - x(p_0) \rangle = 0 \qquad \textup{for all $p \in M_0'$},
\] 
namely, $M_0'$ lies in the hyperplane orthogonal to $\nu_0$.
\qed
\end{pf}

The full proof of Theorem~\ref{th:unbounded} is more involved and requires a refinement of Lemma~\ref{le:Hsign}, so we first describe the proof of a \emph{special case} of Theorem~\ref{th:unbounded}, which contains the essential ideas.
\begin{defi} \label{de:separation}
Let $X$ be a topological space and let $E$ be a non-empty closed subset of $X$. We say that $E$ locally separates $X$ if there exists an open neighborhood $N$ of $E$ so that $N \setminus E$ is disconnected. We say that $E$ separates $X$ if $X\setminus E$ is disconnected.
\end{defi}

\begin{pfthunbounded-special}
By Gauss equation, $R\ge 0$ implies that $M_0$ equals the set $\{ p\in M: H=0 \mbox{ at } p\}$. Suppose that $H$ changes signs.  Then  $\{ p\in M: H \ge 0 \mbox{ at } p\}$ has a non-empty boundary. In this proof, we \emph{assume} that the distances between the connected components of $\partial \{ p\in M: H \ge 0 \mbox{ at } p\}$ have a uniform positive lower bound. (For example, this condition is implied if the boundary has only finitely many components.) 

Suppose $H$ changes signs through $\Gamma$. By Lemma~\ref{le:geodesic}, $\Gamma$ lies in a hyperplane $\Pi$ which is tangent to $M$ at $\Gamma$, and $M$ can be represented as the graph of a $C^{n+1}$-function $u$ in an open neighborhood of $\Gamma$ in $\Pi$ with $u=0, |Du|=0$ on $\Gamma$. By the assumption that $\Gamma$ has a uniform positive distance away from other components of $\partial \{ p\in M: H \ge 0 \mbox{ at } p\}$, the subset 
\[
	\Gamma \sqcup \mbox{int}(\{ p \in M: H \ge 0 \mbox{ at } p\}) \sqcup \{ p \in M: H < 0 \mbox{ at } p\}
\]
is an open neighborhood of $\Gamma$ in $M$, where $\mbox{int}(V)$ denotes the interior of a set $V$. Hence, $\Gamma$ locally separates $M$. Because $M$ is graphical near $\Gamma$, $\Gamma$ also locally separates $\Pi$.

Suppose to the contrary that $\Gamma$ is bounded. By Proposition~\ref{pr:enclose} $\Gamma$ encloses a bounded connected open set $W$ in  $\Pi$ with $\p W \subset \Gamma$ and $W\cap \Gamma =\emptyset$. By the assumption that $\Gamma$ has a uniform positive distance away from other components of $\partial \{ p\in M: H \ge 0 \mbox{ at } p\}$, we have either $H\ge 0$ or $H<0$ everywhere in some neighborhood of $\partial W$ in $W$. By Lemma~\ref{le:Hsign}, $u\equiv 0$ on $W$. Applying the argument to all bounded open sets enclosed by $\Gamma$ yields that the mean curvature is zero everywhere on $M_+$.   It contradicts that $M_+$ contains a point of positive mean curvature.
\qed
\end{pfthunbounded-special}

In the above proof, we impose an extra assumption to ensure that $\Gamma$ encloses $W$ and $H$ has a sign in $W$ near $\partial W$. While the assumption is not essential to find a set $W$ enclosed by $\Gamma$, we cannot rule out the possibility that $H$ may change signs in any open subsets of $W$ containing $\partial W$ if the boundary of $\{ p\in M: H \ge 0 \mbox{ at } p\}$ has infinite many components and $\partial W$ contains a limit point of other components. In order to take into account of this, we prove a refinement of Lemma~\ref{le:Hsign} in the following two results. Our goal is to replace the regions of the graph where $H$ changes signs by hyperplanes and to obtain a non-trivial $C^2$ graph. The resulting graph has non-negative scalar curvature and non-negative mean curvature. This would contradict Lemma~\ref{le:Hsign}.

\begin{prop} \label{pr:surgery}
Let $X$ be a contractible open subset of $\mathbb{R}^n$ and let $f\in C^{n+1}(X)$. Suppose the graph of $f$ has non-negative scalar curvature. Let $\Omega \subset X$ be a non-empty connected component of $\{x \in X: H >0 \mbox{ at } (x,f(x)) \}$.  Then there exists $\tilde{f} \in C^2 (X)$ so that $\tilde{f} = f$ on $\Omega$, $\tilde{f}$ is an affine function on each connected component of $X \setminus \Omega$, and the graph of $\tilde{f}$ has non-negative scalar curvature and non-negative mean curvature. Also, almost every real number is a regular value of $\tilde{f}$.
\end{prop}
\begin{proof}
Denote ${\partial \Omega}\cap X =\sqcup_{\alpha} \Sigma_{\alpha} $ where each $\Sigma_{\alpha}$ is a connected component. Note that the graph of $f$ has zero mean curvature on $\Sigma_{\alpha}$. Denote by $\mbox{Graph}[f]$ the graph of $f$. Because the scalar curvature is non-negative, by Lemma~\ref{le:geodesic} $\mbox{Graph}[f]\big|_{\Sigma_{\alpha}}$ is contained in a hyperplane $\Pi_{\alpha} \subset \mathbb{R}^{n+1}$ for each $\alpha$ and $\textup{Graph}[f]$ is tangent to $\Pi_{\alpha}$ with $|D^2f|=0$ on $\Sigma_{\alpha}$.

Let $\tilde{f} = f$ on $\overline{\Omega}\cap X$. By Proposition~\ref{pr:boundary}, the boundary of each connected component of $X\setminus \overline{\Omega}$ is connected, so we can define $\tilde{f}$ across $\Sigma_{\alpha}$ on each connected component of $X \setminus \overline{\Omega}$ by the affine function that defines $\Pi_{\alpha}$. Because $|D^2f|=0 = | D^2 \tilde{f} |$ on $\Sigma_{\alpha}$, $\tilde{f} \in C^2 (X)$. By Sard's theorem, almost every real number is the regular value of $f$. By construction, the regular value of $f$ is also the regular value of $\tilde{f}$. 
\end{proof}

\begin{thm} \label{th:zero} 
Let $W$ be an open subset in $\mathbb{R}^n$  and let $p \in \partial W$ be a convex point of $W$. Denote by $B(p)$ an open ball in $\mathbb{R}^n$ centered at $p$. Suppose $f\in C^{n+1}(\overline{W}\cap B(p))$ and $f=c, |Df|=0, |D^2 f|=0$ on $\partial W \cap B(p)$ for some constant $c$. If the scalar curvature of the graph of $f$ is nonnegative, then $f\equiv c$ in $\widetilde{B}(p)\cap W$ for  an open ball $\widetilde{B}(p)$ centered at $p$ of radius $d_0$ for any $0< d_0 < \sup \{ d(P, W\cap \partial B(p)):  \mbox{ $P$ is a hyperplane with $P \cap \overline{W} = \{p\}$}\}$, where $d(\cdot, \cdot)$ is the Euclidean distance.

As a corollary, if $W$ is bounded and $f\in C^{n+1}(\overline{W}\cap N)$ for some open set $N$ containing $\partial W$, $f=c, |Df|=0, |D^2 f|=0$ on $\partial W$, and the graph of $f$ has non-negative scalar curvature. Then $f\equiv c$ on $\overline{W}\cap N$.
\end{thm}
\begin{proof}
 Without loss of generality we assume $c=0$. Suppose  to the contrary $f\not\equiv 0$ on $\widetilde{B}(p) \cap W$.  By Lemma~\ref{le:Hsign}, $H$ changes signs in $\widetilde{B} \cap W$. Choose $x_0\in W\cap \widetilde{B}(p)$ so that $H>0$ at $(x_0, f(x_0))$.  Let $\Omega \subset W\cap B(p)$ denote the connected component of $\{x\in W \cap B(p): H > 0 \mbox{ at } (x,f(x))\}$ containing $x_0$. 

By Proposition~\ref{pr:surgery} (with $X= B(p)$), there exists $\tilde{f} \in C^2(B(p))$ so that $\tilde{f} = f$ on $\Omega$,  $\tilde{f}$ is an affine function on each connected component of $ B(p)\setminus \overline{\Omega}$, and  the graph of $\tilde{f}$ has non-negative scalar curvature and non-negative mean curvature.  Because $ |D^2f|=0$ on $\partial W\cap B(p)$, the mean curvature  of the graph of $f$ is zero on $\partial W\cap B(p)$ and hence $\partial W\cap B(p)$ lies in $B(p)\setminus\Omega$. Therefore, $\tilde{f}$ is an affine function on $\partial W\cap B(p)$.

If $\tilde{f} = 0= |D \tilde{f}|$ on $\partial W \cap B(p)$, then Lemma~\ref{le:Hsign} implies that $\tilde{f} \equiv 0$ in $\widetilde{B}(p)\cap W$. It contradicts that $H>0$ at $(x_0, f(x_0))$.

If $\tilde{f} $ is another affine function on $\partial W \cap B(p)$, we first translate $\tilde{f}$ so that $\tilde{f} = 0$ at $p$ and then rotate the graph of $\tilde{f}$, still denote the graphing function by $\tilde{f}$,  so that $|D\tilde{f}|=0$ and $\tilde{f} = 0$ on $\partial W' \cap B'(p)$ for some open set  $W'$ and an open ball $B'(p)$.  Again applying Lemma~\ref{le:Hsign}  leads a contradiction.
\end{proof}

The following maximum principle type result will be used in Section~\ref{se:thm2}.
\begin{lemma} \label{le:maximum-principle}
Denote by $B_r$ the open ball in $\mathbb{R}^n$ centered at the origin of radius $r$. Let $f\in C^n (B_{r_2}\setminus \overline{B}_{r_1})\cap C^1 (\overline{B_{r_2}\setminus B_{r_1}})$ for some $r_2 > r_1 > 0$. Suppose that $f$ satisfies $H(f) \ge 0$ and the scalar curvature of the graph of $f$ is nonnegative. Then 
\[
	 \max_{\overline{B}_{r_2} \setminus B_{r_1}} f =\max_{\p B_{r_2}} f . 
\]
Moreover, if $\displaystyle f(x) = \max_{\p B_{r_2}} f $ for some interior point $x \in B_{r_2}\setminus \overline{B}_{r_1}$, then $f \equiv  \max_{\p B_{r_2}} f$ in $\overline{B_{r_2}\setminus B_{r_1}}$.
\end{lemma}
\begin{remark}
Lemma~\ref{le:maximum-principle} does not follow directly from applying the standard maximum principle to $H(f) \ge 0$, since we impose no hypothesis on $\max_{\p B_{r_1}} f$. This result may have interests of its own.
\end{remark}
\begin{pf}
By subtracting $\max_{\p B_{r_2}} f$ from $f$,  we may assume $\max_{\p B_{r_2}} f  =0$. Suppose to the contrary that $f$ is not identically zero and $f>0$ somewhere in $B_{r_2} \setminus B_{r_1}$. Then the level set
\[
	\Sigma_{\epsilon} = \{ x \in B_{r_2} \setminus B_{r_1}: f(x) =\epsilon \}
\]
is non-empty for $\epsilon>0$ sufficiently small.   By Morse--Sard theorem, for almost every small $\epsilon$, $\Sigma_{\epsilon}$ is a piece of $C^{n}$ hypersurface in $\mathbb{R}^n$. Note that either $\Sigma_{\epsilon}$ has no boundary, or $\partial \Sigma_{\epsilon}$ is contained in $\p B_{r_1}$. Fix $\epsilon>0$ so that $|Df|$ does not vanish on $\Sigma_{\epsilon}$. Let $p \in \p B_{r_2}$ be a point that is closest to $\Sigma_{\epsilon}$. Now we continuously translate the $(n-1)$-sphere $\p B_{r_2}$ toward $\Sigma_{\epsilon}$, along its inward normal at $p$. Denote by $S'$ the $(n-1)$-sphere that touches $\Sigma_{\epsilon}$ for the first time. Then, $S'$ must be tangent to $\Sigma_{\epsilon}$ at an interior point $x_0$, and $\Sigma_{\epsilon}$ lies in the ball enclosed by $S'$.  Then by comparison principle, the mean curvature $H_{\Sigma_{\epsilon}}$ of $\Sigma_{\epsilon}$ with respect to the inward unit normal vector $Df/ |Df|$ is positive at $x_0$. However, $H_{\Sigma_{\epsilon}} \le 0$ at $x_0$  by Corollary~\ref{co:HHR}, and it leads a contradiction.

Last, if $f(x) =  \max_{\p B_{r_2}} f $ for some interior point $x$, then by strong maximum principle and $H(f) \ge 0$ we prove that $f \equiv  \max_{\p B_{r_2}} f$ in $\overline{B_{r_2}\setminus B_{r_1}}$. 
\qed
\end{pf}


\begin{pfthunbounded}
Recall that $M_+$ is a connected component of $\{p \in M: H \ge 0 \mbox{ at } p\}$ that contains a point of positive mean curvature and $\Gamma$ is a connected component of $\partial  M_+$ that intersects the boundary of a connected component of $M\setminus M_+$. Because $\Gamma \subset M_0$, by Lemma~\ref{le:geodesic} $\Gamma$ lies in a hyperplane $\Pi$ and $M$ is locally the graph of a $C^{n+1}$ function $f$ over an open neighborhood of $\Gamma$ in $\Pi$. 

Suppose to the contrary that $\Gamma$ is a bounded. Then $\Gamma$ is contained in a compact subset $K$ of $\Pi$. Let $\Omega$ be the connected component of $\{x \in K \subset \Pi:  \mbox{$M$ is graphical and } H \ge 0 \mbox{ at } (x, f(x))\}$ that contains $\Gamma$. By choosing $K$ sufficiently large,  $\Omega$ contains a point of positive mean curvature.  Let $\Pi \setminus \Omega = \sqcup_{\alpha } U_{\alpha}$ where each $U_{\alpha}$ is a connected component. By Proposition~\ref{pr:boundary}, $\partial U_{\alpha}$ is connected. Because $\Gamma$ intersects the boundary of a connected component of $M\setminus H_+$, $\Gamma$ contains $\partial U_{\alpha_0}$ for some $\alpha_0$. First, note that $U_{\alpha_0}$ must be unbounded. Otherwise, Theorem~\ref{th:zero} implies  $H\equiv 0$ on $U_{\alpha_0}$, which contradicts that $U_{\alpha_0}$ is in the complement of $\Omega$. Hence, because $\Omega$ is bounded, $U_{\alpha_0}$ is the unique unbounded component containing infinity. Therefore, $\Pi \setminus U_{\alpha_0}$ is bounded with the boundary contained in $\Gamma$. By Theorem~\ref{th:zero} again, it contradicts that $\Omega$ contains a point of positive mean curvature.

\qed
\end{pfthunbounded}
Using Theorem~\ref{th:unbounded}, we provide another proof to Sacksteder's theorem for closed hypersurfaces. 
\begin{thm} \label{th:Sa}
Suppose $M$ is a $C^{n+1}$-smooth closed hypersurface in $\mathbb{R}^{n+1}$. If the sectional curvature of $M$ is nonnegative, then the induced second fundamental form is semi-positive definite. As a consequence, $M$ is the boundary of a convex body in $\mathbb{R}^{n+1}$.
\end{thm}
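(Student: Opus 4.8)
The plan is to deduce Theorem~\ref{th:Sa} from Theorem~\ref{th:unbounded} by ruling out sign changes of the second fundamental form on a closed hypersurface with nonnegative sectional curvature, and then invoking the classical fact that a closed hypersurface with semi-positive definite second fundamental form bounds a convex body. First I would observe that nonnegative sectional curvature of $M$ implies nonnegative scalar curvature, so the set $\{H=0\}$ coincides with the set $M_0$ of geodesic points by the Gauss equation argument already given in the text. By Lemma~\ref{le:geodesic}, each connected component $M_0'$ of $M_0$ lies in a tangent hyperplane. Since $M$ is closed, every such component is bounded, so by Theorem~\ref{th:unbounded} no component of $M_0$ can separate $\{H>0\}$ from $\{H<0\}$; hence $H$ has a fixed sign on $M$, and after replacing $\nu$ by $-\nu$ if necessary we may assume $H\ge 0$ everywhere.

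The next step is to upgrade ``$H\ge 0$'' to ``$A\ge 0$.'' Because the sectional curvature is nonnegative, for any point $p$ and any pair of principal directions $e_i,e_j$ the Gauss equation gives $\kappa_i\kappa_j = K(e_i,e_j)\ge 0$, so at each point the principal curvatures $\kappa_1,\dots,\kappa_n$ all have the same sign (or vanish): either all $\ge 0$ or all $\le 0$. Thus $A$ is semi-definite at every point, with a sign that a priori could vary from point to point. But the sign of $A(p)$ when $A(p)\neq 0$ is exactly the sign of $H(p)$, which we have just shown is globally $\ge 0$; and where $A(p)=0$ there is nothing to prove. Therefore $A$ is semi-positive definite on all of $M$.

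Finally, I would invoke the standard consequence: a closed (compact, without boundary), connected, embedded $C^2$ hypersurface in $\mathbb{R}^{n+1}$ whose second fundamental form is everywhere semi-positive definite is the boundary of a convex body. This can be seen, for instance, via the support function / Gauss map argument of van~Heijenoort~\cite{V} and Hartman--Nirenberg~\cite{HN}: the convexity of $M$ at every point (semi-definiteness of $A$) forces the Gauss map to be a homeomorphism onto $\mathbb{S}^n$ when $M$ is compact and connected, and a locally convex closed hypersurface in Euclidean space is globally convex; alternatively one can apply do Carmo--Lima~\cite{DL}. If $M$ is not connected, each component is then a compact convex hypersurface, but two disjoint compact convex hypersurfaces in $\mathbb{R}^{n+1}$ cannot both be embedded and non-intersecting unless one lies inside the other, which is incompatible with both having outward-pointing $A\ge 0$ relative to the chosen orientation; so $M$ is connected.

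The main obstacle is the last step, the passage from pointwise convexity ($A\ge 0$ everywhere) to global convexity (boundary of a convex body). This is precisely the content of the classical van~Heijenoort--Hartman--Nirenberg--Sacksteder circle of results, and a self-contained argument requires some care: one must rule out, for example, that $M$ ``wraps around'' in a non-embedded way, which is where embeddedness and compactness are essential. In the write-up I expect one either cites this global convexity statement directly, or reproduces a short Gauss-map degree argument: on a compact connected hypersurface with $A\ge 0$, the set where $A>0$ (strictly) is nonempty (by compactness, the farthest point from the origin has $A>0$), the Gauss map is a local diffeomorphism there with positive Jacobian, hence an open map, and a connectedness/properness argument shows it is a covering of $\mathbb{S}^n$, therefore a diffeomorphism, from which convexity follows by an elementary support-hyperplane argument. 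The pieces before this — reducing to $H\ge 0$ via Theorem~\ref{th:unbounded} and then to $A\ge 0$ via the Gauss equation — are short and essentially formal.
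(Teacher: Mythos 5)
Your argument is essentially the same as the paper's: reduce to the sign of $H$ via Theorem~\ref{th:unbounded} and the separating-component observation, and upgrade to $A\ge 0$ using the pointwise fact (from nonnegative sectional curvature and the Gauss equation) that the principal curvatures all have the same sign at each point. The paper phrases this as a single contradiction argument on $A$ directly (a separating component for $\{A>0\}$ vs.\ $\{A<0\}$ is also one for $\{H>0\}$ vs.\ $\{H<0\}$), whereas you first derive the sign of $H$ and then lift to the sign of $A$; these are clearly equivalent. Your treatment of the final ``boundary of a convex body'' step is more explicit than the paper's, which states it without comment as a classical consequence of local convexity plus compactness.

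One small flaw: your connectedness aside is not correct as stated. Two disjoint embedded compact convex hypersurfaces in $\mathbb{R}^{n+1}$ (e.g.\ two disjoint round spheres) coexist perfectly well and need not be nested; this is a genuine counterexample to the ``boundary of a convex body'' conclusion when $M$ is disconnected. The hypothesis should simply be read as including connectedness (which the paper uses implicitly throughout); you do not need to, and cannot, deduce it.
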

\begin{pf}
Let $(A_{ij})$ be the second fundamental form of $M$. Because the sectional curvature of $M$ is nonnegative, at a point in $M$, the principal curvatures are either all nonnegative or all non-positive,  Suppose to the contrary that $(A_{ij})$ is not semi-positive definite. Then by taking the traces of the sectional curvature and $(A_{ij})$, $M$ has non-negative scalar curvature and its mean curvature changes signs. It contradicts Theorem~\ref{thm:R}.
\qed
\end{pf}

Theorem~\ref{thm:R} has applications in the mean curvature flow when the initial hypersurface has nonnegative scalar curvature. Let us briefly recall the setting of the mean curvature flow. Let $M$ be a closed hypersurface in $\mathbb{R}^{n+1}$ represented by a diffeomorphism: For  an open subset $U \subset \mathbb{R}^n$, 
\[
	F_0 : U \rightarrow F_0(U) \subset M \subset \mathbb{R}^{n+1}.
\]
Let $F(x,t)$ be a family of maps satisfying
\begin{align*} 
	& \frac{\partial}{\partial t} F(x,t) = H(x,t)\nu(x,t), \quad x \in U,\\
	& F(\cdot, 0) = F_0,\notag
\end{align*}
where $\nu(\cdot,t)$ is the inward unit normal vector to $M_t:= F(M, t)$ and $H(\cdot, t)$ is the mean curvature with respect to $\nu$. The family of closed hypersurfaces $\{ M_t\}$ for $t > 0$ is called a solution to the mean curvature flow.

\begin{pfmcf}
By Theorem \ref{thm:R}, the mean curvature of $M$ is nonnegative. It is well known that 
if $M$ has nonnegative mean curvature, then $M_t$ has positive mean curvature for all $t>0$. We then consider
\[
	q_2 : = \frac{R}{2H},
\]
where $H$ and $R$ are the mean curvature and scalar curvature of $M_t$, respectively.
A result of Huisken--Sinestrari~\cite[p. 61, Corollary 3.2]{HS} shows that the evolution equation of $q_2$ satisfies the parabolic  strong  maximum principle. It follows that $q_2 > 0$ on $M_t$ for all $t>0$, because $q_2 \ge 0$ on $M$. Thus, we conclude that $R>0$ on $M_t$ for all $t>0$. 
\qed
\end{pfmcf}


\section{Examples of nonnegative $k$th mean curvature} \label{se:example}
Let $M$ be a smooth closed hypersurface in $\mathbb{R}^{n+1}$. Denote by $\kappa_i$, $i=1,\ldots,n$, the principal curvatures of $M$. We define, for each $1 \le k \le n$, the \emph{$k$th mean curvature} of $M$ to be
\[
   \sigma_k(A) = \sum_{1 \le i_1 < \cdots < i_k \le n} \kappa_{i_1}\cdots \kappa_{i_k}.
\]
In particular, $\sigma_1(A)$, $2 \sigma_2(A)$, and $\sigma_n(A)$ are the mean curvature, the scalar curvature, and the Gauss--Kronecker curvature of $M$, respectively.

It is well known that if $\sigma_k(A) > 0$ then $\sigma_l(A) > 0$ for all $1 \le l \le k$ (see, for example, \cite{CNSIII} and \cite[p. 51]{HS}). We are interested in the non-strict inequality case: Namely, the question is, \emph{whether $\sigma_k(A) \ge 0$ would imply $\sigma_l(A) \ge 0$ for all $1 \le l \le k$}? 

For $k = 2$, Theorem~\ref{thm:R} tells us that $\sigma_2(A) \ge 0$ implies $\sigma_1(A) \ge 0$. However, it is no longer true for $k \ge 3$.
In fact, for any $n \ge 3$ and $k \ge 3$, we construct below a family of smooth closed hypersurfaces, which satisfy $\sigma_k(A) \ge 0$ but are not mean convex, i.e., $\sigma_1(A)$ changes signs.

\begin{ex} \label{ex:odd}
  Let $n \ge 3$ and $k$ be an odd integer such that $3 \le k \le n$. Consider the hypersurface in $\mathbb{R}^{n+1}$ given by
  \begin{equation} \label{eq:odd}
     (r - a)^2 + (x^{n+1})^2 = 1,
  \end{equation}
  where $a > 1$ is a constant, and
  \[
     r = \sqrt{(x^1)^2 + \cdots + (x^n)^2}.
  \]
  This hypersurface is homeomorphic to $\mathbb{S}^1 \times \mathbb{S}^{n-1}$, for it can be obtained by rotating a unit circle about the $x^{n+1}$-axis.

 We shall show that, for each $a \in (n/k, n)$,  $\sigma_k(A) \ge 0$ but $\sigma_1(A)$ change signs. (When $k<n$, the same conclusion holds if $a = n/k$.)
In particular, letting $a = n/2$, the hypersurface given by \eqref{eq:odd} has nonnegative $k$th mean curvature while its first mean curvature changes signs. 
  
   Note that the hypersurface is symmetric about $\{ x^{n+1} = 0\}$. Let us consider the lower half portion $x^{n+1} = \phi(r)$,
  where 
  \[
     \phi (r) = -[ 1 - (r -a)^2 ]^{1/2}, \qquad \textup{for all $a-1 \le r \le a+1$}.
  \]
By a direct computation, the $k$th mean curvature of the graph of $x^{n+1} = \phi(r)$ is 
\begin{align*}
   \sigma_k(A) 
   & = \binom{n-1}{k-1} \frac{(\phi')^{k-1}}{r^{k-1}[ 1 + (\phi')^2]^{k/2}} \left(\frac{\phi''}{1 + (\phi')^2} + \frac{n - k}{k} \frac{\phi'}{r} \right) \\
   & = \binom{n-1}{k-1} \big(1 - \frac{a}{r}\big)^{k-1} \left[1 + \frac{n-k}{k}\big(1 - \frac{a}{r}\big) \right].
\end{align*}
Since $k$ is odd, to get $\sigma_k(A)\ge 0$ it suffices to consider 
\begin{equation*} 
   0 \le 1 + \frac{n-k}{k} \big( 1 - \frac{a}{r}\big).
\end{equation*}
That is, when $1 \le k < n$, $\sigma_k(A) \ge 0$ if
\begin{equation*} 
    1 - \frac{a}{r} \ge - \frac{k}{n - k}, \qquad \mbox{ for all } a -1 < r < a+1;
\end{equation*}
and when $k =n$ (then $n$ is odd), $\sigma_n(A)$ is always nonnegative.
On the other hand, 
\[
	\frac{1}{a + 1} > 1 - \frac{a}{r} >  -\frac{1}{a-1}, \qquad \mbox{for $a -1 < r < a+1$.}
\]
Therefore, for all the real numbers $a$ satisfying
\[
   -\frac{1}{n-1} >  - \frac{1}{a - 1} > - \frac{k}{n - k}, \qquad \textup{i.e.}, \quad n > a > \frac{n}{k},
\]
we have $\sigma_k (A) \ge 0$, but $\sigma_1(A)$ changes signs. More precisely, let us fix any real number $a \in (n/k, n)$; then
\begin{align*}
	\sigma_1(A) & > 0, \quad \mbox{ for }  \frac{n-1}{n}a < r <  a + 1, \quad \textup{ and}\\
	\sigma_1(A) & < 0, \quad \mbox{ for }   a - 1 < r < \frac{n-1}{n}a.
\end{align*}
\end{ex}

\begin{ex} \label{ex:even}
Let $n \ge 4$, and $k$ an even integer satisfying $4 \le k \le n$. We consider the smooth embedded hypersurface in $\mathbb{R}^{n+1}$ given by the equation
\begin{equation} \label{eq:CLa}
   (r - a)^2 + (x^n)^2 + (x^{n+1})^2 = 1, 
\end{equation}
where $a>1$ is a constant to be determined.  This hypersurface is obtained by rotating a $2$-dimensional unit sphere about a $2$-dimensional coordinate plane, and is therefore homeomorphic to $\mathbb{S}^2 \times \mathbb{S}^{n-2}$. 

We would like to prove that, for any $a \in ( 1+ b(n,k) , n/2)$ the hypersurface defined by \eqref{eq:CLa} satisfies that $\sigma_k(A) \ge 0$ and $\sigma_1(A)$ changes signs, where
\[
   b(n,k) = \frac{n-k}{k-1} + \frac{1}{k-1} \sqrt{\frac{(n-1)(n-k)}{k}} \ge 0.
\]
That the interval $( 1 + b(n,k) , n/2)$ is nonempty for $n \ge k \ge 4$ is justified by Proposition~\ref{pr:a} below. (However, the interval is empty for $k =2$ and all $n \ge 2$.)

We first derive a formula for $\sigma_k(A)$ for all $n \ge 3$ and $1 \le k \le n$. Let
\[
   \psi(r, x^n) = - \sqrt{1 - (r -a )^2 - (x^n)^2}, \qquad 0 < a -1 \le r \le a + 1.
\]
By rotation symmetry, it suffices to carry out the calculation at a point where $x^1 = \cdots = x^{n-2} = 0$ and $x^{n-1} = r$. Unless otherwise indicated, we let Greek letters such as $\alpha, \beta$ range from $1$ to $n$, and English letters such as $i, j$ range from $1$ to $n-1$. We denote $\psi_{\alpha}= \p \psi/\p x^{\alpha}$ and $\psi_{\alpha\beta} = \p^2 \psi/\p x^{\alpha} \p x^{\beta}$.

Note that the induced metric is given by
\[
   g_{\alpha \beta} = \delta_{\alpha \beta} + \psi_{\alpha} \psi_{\beta}.
\]
It follows that the matrix, at the point $(0, \dots, 0, r, x^n)$, 
\[
   (g_{\alpha \beta}) = \begin{bmatrix}
   I_{n-2} & 0 \\
   0 & T_2
   \end{bmatrix}.
\]
Here $I_m$ denotes the $m \times m$ identity matrix, and $T_2$ is a $2 \times 2$ matrix defined by
\begin{align}
   T_2 & = \left[ \begin{matrix}
         1 + (\psi_{r})^2 & \psi_r \psi_n \\
         \psi_r \psi_n & 1 + (\psi_n)^2
   \end{matrix}
   \right] \notag \\
   & = (-\psi)^{-2} \left[ \begin{matrix}
         1 - (x^n)^2 & x^n (r - a) \\
         x^n (r - a) & 1 - (r - a)^2
   \end{matrix}
   \right], \label{eq:T2}
\end{align} 
in which
\[
  \psi_r = \frac{\partial \psi}{ \partial r} = \frac{r - a}{- \psi}, \qquad \psi_n = \frac{\partial \psi}{ \partial x^n} = \frac{x^n}{- \psi}.
\]
On the other hand, the second fundamental form
\[
   A_{\alpha \beta} = \frac{\psi_{\alpha \beta}}{\sqrt{1 + |D \psi|^2}} = (-\psi) \psi_{\alpha \beta}.
\]
Then, at the point $(0,\ldots, 0, r, x^n)$, the second fundamental form matrix 
\[
   (A_{\alpha \beta}) = \left[ \begin{matrix}
   (1 - a/r) I_{n-2} & 0 \\
   0 & T_2
   \end{matrix}
   \right],
\]
where $T_2$ is the $2 \times 2$ matrix given by \eqref{eq:T2}. Hence, the matrix of the shape operator is given by
\begin{align*}
   (A^{\beta}_{\alpha})
   = (A_{\alpha \gamma}) (g_{\beta \gamma})^{-1} 
    = \left[ \begin{matrix}
   (1 - a/r) I_{n-2} & 0 \\
   0 & I_2
   \end{matrix}
   \right]. 
\end{align*}
Therefore, we have
\begin{equation}\label{eq:H}
   H = \sigma_1(A) = (n-2)t + 2,
\end{equation}
and for any $2 \le k \le n$, 
\begin{equation} \label{eq:eg2k}
  \sigma_k(A) = \binom{n - 2}{k} t^k + 2 \binom{n-2}{k-1} t^{k-1} + \binom{n-2}{k-2} t^{k-2}.
\end{equation}
Here we denote $t = 1 - a/r$, which satisfies that
\begin{align} \label{eq:ranget}
    - \frac{1}{a - 1} <  t < \frac{1}{a + 1}, \qquad \mbox{for all $a - 1 < r < a + 1$}.
\end{align}
Moreover, in \eqref{eq:eg2k}, we use the combinatoric convention so that 
  \begin{equation*} 
     \sigma_n (A) = t^{n-2}, \qquad \sigma_{n-1}(A) = 2t^{n-2} + (n-2) t^{n-3}.
  \end{equation*}
  
Now return to our setting $n \ge k \ge 4$ and $k$ being even.
Clearly, for $k = n$ (thus $n$ is even), we always have $\sigma_n(A) \ge 0$. 
For $k < n$ and $k$ being even, $\sigma_k(A) \ge 0$ if 
\begin{equation} \label{eq:quad}
		\binom{n - 2}{k} t^2 + 2 \binom{n-2}{k-1} t + \binom{n-2}{k-2} \ge 0.
\end{equation}
Note that the inequality \eqref{eq:quad} holds for all $t \ge t_1$, where 
\[
   t_1 = - \frac{k-1}{n - k} \left[\sqrt{\frac{n - 1}{k(n-k)}} + 1\right]^{-1} = - \frac{1}{b(n,k)}.
\]
On the other hand, by \eqref{eq:H} we have $H < 0$ for
\[
		t < - \frac{2}{n - 2}.
\]
Thus, if we can show that $t_1 < - 2/(n-2)$, i.e.,
\begin{equation} \label{eq:nt1}
   \frac{n}{2} > 1 - t_1^{-1} 
   = 1 + b(n,k),
\end{equation}
then by \eqref{eq:ranget} for any real number $a$ satisfying
\begin{equation} \label{eq:defa}
   - \frac{2}{n-2} > - \frac{1}{a-1} > t_1, \qquad \textup{namely, \quad $\frac{n}{2} > a > 1 + b(n,k)$},
\end{equation}
we have that $\sigma_k(A) \ge 0$ and that $\sigma_1(A)$ changes signs; more precisely, for a fixed $a \in (1 + b(n,k),n/2)$, 
\begin{align*}
   \sigma_1(A) & > 0, \qquad \textup{for $\frac{n-2}{n} a < r < a + 1$, \quad and} \\
   \sigma_1(A) & < 0, \qquad \textup{for $a - 1 < r < \frac{n-2}{n}a $}.
\end{align*}
Now notice that \eqref{eq:nt1} is assured by Proposition~\ref{pr:a} below. This finishes the proof. 
We remark that, when $k<n$, the same result holds if $a = 1 + b(n,k)$.
\end{ex}

\begin{prop} \label{pr:a}
   For each $n \ge k \ge 4$, 
   \[
       \frac{n}{2} >  1 + b(n,k) = 1 + \frac{n-k}{k-1} + \frac{1}{k-1} \sqrt{\frac{(n-1)(n-k)}{k}}.
   \]
\end{prop}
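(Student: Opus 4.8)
The plan is to eliminate the square root and reduce everything to a single quadratic inequality in the variable $m := n - k \ge 0$, whose coefficients I will show are all nonnegative when $k \ge 4$.

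First I would move the radical-free terms of $1 + b(n,k)$ to the left. A direct computation gives
\[
  \frac{n}{2} - 1 - \frac{n-k}{k-1} = \frac{nk - 3n + 2}{2(k-1)} = \frac{m(k-3) + (k-1)(k-2)}{2(k-1)},
\]
so the desired inequality $\tfrac n2 > 1 + b(n,k)$ is equivalent, after multiplying by $2(k-1) > 0$ and using $n-k = m$, $n-1 = m+k-1$, to
\[
  m(k-3) + (k-1)(k-2) > 2\sqrt{\frac{(m+k-1)m}{k}}.
\]
Since $k \ge 4$ and $m \ge 0$, the left-hand side is strictly positive and the right-hand side is nonnegative, so squaring is an equivalence. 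Squaring, multiplying through by $k$, and collecting powers of $m$, the claim becomes
\[
  \bigl(k(k-3)^2 - 4\bigr)\, m^2 \;+\; 2(k-1)\bigl(k(k-3)(k-2) - 2\bigr)\, m \;+\; k(k-1)^2(k-2)^2 \;>\; 0.
\]

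Finally I would verify that for every integer $k \ge 4$ each coefficient of this quadratic in $m$ is nonnegative and the constant term is strictly positive: $k(k-3)^2 - 4 \ge 4\cdot 1 - 4 = 0$ (with equality exactly at $k=4$), $k(k-3)(k-2) - 2 \ge 4\cdot 1\cdot 2 - 2 = 6 > 0$, and $k(k-1)^2(k-2)^2 \ge 4\cdot 9\cdot 4 > 0$. As $m \ge 0$, the left-hand side above is then a sum of nonnegative terms together with a strictly positive constant term, hence strictly positive, which proves the proposition.

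I do not expect a genuine obstacle here; the only things requiring a little care are the bookkeeping in the first algebraic simplification (keeping track of the substitution $m = n-k$) and noticing that the $m^2$-coefficient degenerates to $0$ precisely at $k = 4$, so that in that boundary case the inequality is linear rather than quadratic in $m$ — which is still handled by the same sign check, since the remaining coefficient and constant term stay positive.
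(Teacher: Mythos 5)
Your proof is correct. The underlying idea matches the paper's: clear the radical-free terms, eliminate the square root, and reduce to a polynomial inequality whose coefficients one checks are nonnegative for $k \ge 4$ with a strictly positive constant term. The implementation differs: the paper rationalizes by multiplying and dividing by the conjugate $\tfrac{(k-3)n}{2}+1+\sqrt{(n-1)(n-k)/k}$, which produces an exact positive factorization
\[
  \frac{n}{2} - 1 - b(n,k) = c(n,k)\left\{\Big[\frac{(k-3)^2}{4}-\frac{1}{k}\Big]n + \Big(k-2+\frac{1}{k}\Big)\right\}
\]
with an explicit positive prefactor $c(n,k)$, and then keeps $n$ as the variable; you instead square the inequality (correctly justifying that this is an equivalence since the radical-free side is strictly positive for $k\ge 4$) and collect in $m = n-k$. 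These amount to the same positivity check; in particular your leading coefficient $k(k-3)^2-4$ is just $4k$ times the paper's $\tfrac{(k-3)^2}{4}-\tfrac{1}{k}$, and both vanish precisely at $k=4$. The paper's conjugate trick is a touch slicker in that positivity is displayed as an identity with no case on whether squaring is reversible; your version needs the explicit remark that squaring is legitimate (which you supply) and the small observation about the degenerate $k=4$ case (which you also handle). No gaps.
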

\begin{proof}
Observe that
  \begin{equation*} 
     \frac{n}{2} - 1 - b(n,k) = c(n,k) \left\{\Big[\frac{(k-3)^2}{4} - \frac{1}{k}\Big] n + \big(k - 2 + \frac{1}{k} \big) \right\} > 0,
  \end{equation*}
  in which
  \[
     c(n,k) = \frac{n}{k-1} \left( \frac{(k-3)n}{2} + 1 + \sqrt{\frac{(n-1)(n-k)}{k}}\right)^{-1} > 0,
  \]
  for $n \ge k \ge 4$.
\end{proof}

\begin{remark}
By varying the parameter $a$ in Example~\ref{ex:odd} and Example~\ref{ex:even}, we can also provide examples of closed hypersurfaces in $\mathbb{R}^{n+1}$, which satisfy that $\sigma_k(A) \ge 0$ but $\sigma_{k-1}(A)$ changes signs, for each $3 \le k \le n$. This in particular answers a question raised by H. D. Cao. It should be also possible to vary $a$ so that $\sigma_k(A) \ge 0$ and $\sigma_l(A)$ changes signs for some $l$ with $ 1\le l < k$ and $k\ge 3$.
\end{remark}

\section{Positive mass theorem for hypersurfaces} \label{se:thm2}

Throughout this section, we denote by $M$ a complete non-compact,  embedded, and orientable $C^{n+1}$-smooth  hypersurface in $\mathbb{R}^{n+1}$, unless otherwise indicated. Let $A$ and $H$ be, respectively, the second fundamental form and the mean curvature of $M$. Recall that 
\begin{eqnarray*}
		M_0 &=& \{ p \in M: A=0 \mbox{ at } p\}.
\end{eqnarray*}
We adopt the convention that $H = - \mbox{div}_0 \nu$, where $\nu$ is a smooth unit normal vector field to $M$ and $\dive_0$ is the Euclidean divergence operator.  For a function $f(x) = f(x^1, \dots, x^n)$, we denote  $f_i = \p f/\p x^i$, $f_{ij} = \p^2 f/\p x^i \p x^j$ for all $i,j= 1,\ldots,n$, and $Df = (f_1,\ldots,f_n)$.

\begin{defi} \label{de:waf}
We say that $M \subset \mathbb{R}^{n+1}$ is an asymptotically flat hypersurface if  $M$ satisfies the following conditions:
\begin{itemize}
\item[(1)] There is a compact subset $K \subset M$ so that $M\setminus K$ consists of countably many components $N_i$, where each $N_i$ is the graph of a function $f_{(i)}$ over the exterior of a bounded region in some hyperplane $\Pi_i$;
\item[(2)] If $\{x^1, \ldots, x^n\}$ are coordinates in $\Pi_i$, we require $\lim_{|x| \rightarrow \infty} f_{(i)}(x) = a_i$, where $a_i$ is either a bounded constant, $a_i = \infty$, or $a_i = -\infty$, and $\lim_{|x| \rightarrow \infty} |Df_{(i)}(x)|=0 $ for each $i$.
\end{itemize}
We refer $N_i$ the ends of $M$. We say that an end $N_i$ is asymptotic to the hyperplane $\Pi_i$ if $a_i=0$. By translation, whenever $|a_i| < \infty$, $N_i$ is asymptotic to a hyperplane.
\end{defi}

\begin{ex}[{\cite{Bray}, \cite[Proposition 2.6]{HW-Penrose}}] \label{ex:schwarzschild}
The spacelike $n$-dimensional ($n\ge 3$) Schwarzschild metric is a complete and conformally flat metric
\[
	\left(\mathbb{R}^n \setminus \{ 0\},  \left( 1+ \frac{m}{2|x|^{n-2}}\right)^{4/(n-2) } \delta \right).
\]
An $n$-dimensional Schwarzschild manifold of $m > 0$ can be isometrically embedded into $\mathbb{R}^{n+1}$, as a spherically symmetric  $C^{\infty}$ asymptotically flat hypersurface of two ends, and each end is the graph of $h(x)$ over $\mathbb{R}^n \setminus B_{(2m)^{1/(n-2)}}$, where
\begin{align*}
	h(x) &= C_0 \pm \sqrt{ 8m (|x| - 2m)}  &&\mbox{if } n =3,\\
	h(x) &= C_0 \pm \sqrt{2m} \ln ( |x| + \sqrt{|x|^2 - 2m} ) &&\mbox{if } n =4,\\
	h(x) &= C_0 \pm O(|x|^{2-\frac{n}{2}}) \quad \mbox{for $|x| \gg 1$} &&\mbox{if } n \ge 5,
\end{align*}
for some constant $C_0$.
\end{ex}

In Section \ref{se:thm1}, we have proved that a \emph{closed} hypersurface with nonnegative scalar curvature is weakly mean convex (up to an orientation). Below, we generalize the result to complete asymptotically flat hypersurfaces.

\begin{pf-complete}
 Suppose to the contrary that $H$ changes signs through $\Gamma$. By  Theorem~\ref{th:unbounded}, $\Gamma$ is unbounded. Hence $\Gamma$ must intersect at least one end $N$. 
Lemma \ref{le:geodesic} yields that  $\Gamma$ lies in a hyperplane, say $\{ x^{n+1} = 0\}$, which is tangent to $N$ at the unbounded subset $\Gamma$ of $N$. Because $M$ is asymptotically flat, $N$ is the graph of $f$ over the exterior region of a hyperplane. By the assumption that $|Df| = o(1)$, the unit normal vector to $N$ must converge to $\p/\p x^{n+1}$. Therefore, we can conclude that the end $N$  is asymptotic to $\{ x^{n+1} = 0 \}$. 
Denote by $h = x^{n+1}\big|_M$ the height function on $M$. By Morse--Sard theorem, the level set $h^{-1}(\epsilon)$ is a $C^{n+1}$  submanifold for almost every $\epsilon$. 

Let $\nu$ be the unit normal vector field on $M$ which is pointing upward on $N$, i.e.,
\begin{equation} \label{eq:afnu}
	\nu = \frac{(-Df, 1)}{\sqrt{1+ |Df|^2}} \quad \mbox{at } (x, f(x))\in N.
\end{equation}
Let $H$ be the mean curvature with respect to $\nu$. By Proposition~\ref{pr:nonempty}, for $\epsilon > 0$ sufficiently small, $h^{-1}(\epsilon) \cap \{ p\in M: H > 0 \mbox{ at } p\} \ne \emptyset$. Note that, for almost every $\epsilon >0$, the mean curvature of each connected component of $h^{-1}(\epsilon)$ that intersects  $\{ p\in M: H > 0 \mbox{ at } p\}$ is non-negative,  because $H$ can only change signs through an unbounded subset by Theorem~\ref{th:unbounded} and $M$ has only countably many ends.

Notice that $h^{-1}(\epsilon)$ is closed for almost every $0< \epsilon \ll 1$. Let $\Sigma_{\epsilon}$ be the \emph{outermost} connected component of $h^{-1}(\epsilon)$ such that $\Sigma_{\epsilon} \cap \{ p\in M: H > 0 \mbox{ at } p\} \ne \emptyset$, i.e., it is not enclosed by any other connected component of $h^{-1}(\epsilon)$ which intersects $\{ p\in M: H > 0 \mbox{ at } p\}$. 

Now we fix a sufficiently small $\epsilon >0$ so that $\Sigma_{\epsilon}$ has nonempty intersection with $N$ and $|\nabla^M h|\neq 0$ on every point in $\Sigma_{\epsilon}$. 
Because $f$ tends to zero at infinity and $\Sigma_{\epsilon}$ is outermost, $\eta =  Df/ |Df|$ is the \emph{inward} unit normal vector on $\Sigma_{\epsilon} \cap N$.  Note $\langle \nu, \eta\rangle < 0$ on $\Sigma_{\epsilon} \cap N$. Because $|\nabla^M h | \neq 0$ on every point in $\Sigma_{\epsilon}$,  $\langle \nu, \eta \rangle$ is strictly negative everywhere on  $\Sigma_{\epsilon}$. Denote by $H_{\Sigma_{\epsilon}}$ the mean curvature with respect to $\eta$.  Apply Corollary~\ref{co:HHR} to obtain $H_{\Sigma_{\epsilon}} \le 0$. This contradicts the compactness of $\Sigma_{\epsilon}$ (for,  a compact set has at least one convex point at which $H_{\Sigma_{\epsilon} }>0$). Therefore, $H$ has a sign on $M$.
\qed
\end{pf-complete}

\begin{cor} \label{co:half-space}
Let $M$ be a complete connected $C^{n+1}$  asymptotically flat hypersurface of countably many ends in $\mathbb{R}^{n+1}$ with nonnegative scalar curvature. Suppose that an end $N$ of $M$ is asymptotic to the hyperplane $\Pi$. Then $N$ strictly lies in one side of $\Pi$, unless $M$ is identical to $\Pi$.
\end{cor}
\begin{pf}
Without loss of generality, assume that $\Pi=\{ x^{n+1} = 0\}$. Suppose $N$ is the graph of a function $f$ over $\{x^{n+1}=0\}\setminus B_{r_1}$ for some $r_1>0$ and $ |f(x)| \to 0 $ as $|x| \to \infty$. We would like to prove that $N$ is contained  in either $\{x^{n+1} > 0\}$ or $\{ x^{n+1} < 0\}$, unless $M$ is identical to $\{ x^{n+1} = 0\}$. 

By Theorem~\ref{th:halfspace}, the mean curvature of $M$ has a sign. Suppose $H \ge 0$ with respect to $\nu$, where $\nu$ is the upward pointing unit normal vector on $N$ given by \eqref{eq:afnu}. (Otherwise, we reflect $M$ about $\{ x^{n+1} = 0\}$.) By Lemma~\ref{le:maximum-principle}, 
\[
	\max_{\overline{B}_{r_2} \setminus B_{r_1}} f = \max_{\p B_{r_2}} f \qquad \mbox{for all } r_2 > r_1 .
\]
Because $\max_{\p B_{r_2}} f  \to 0$ as $r_2\to \infty$, we conclude that $f \le 0$ outside $ B_{r_1}$. Moreover, by applying the strong maximum principle to $H(f)\ge 0$, we have $f<0$ outside $B_{r_1}$, unless $f \equiv 0$. In the latter case, we can further conclude that $M$ is identical to $\{ x^{n+1} = 0\}$ by repeating the argument over $B_{r_2 } \setminus B_{r_0}$ for $0\le r_0 < r_1$. 


\qed
\end{pf}

Note that the scalar curvature of a graph has a divergence form~\cite{Reilly:1973} (see also \cite{L}). 
\begin{prop} If $\Omega$ is an open subset in $\mathbb{R}^n$. Let $f\in C^{2}(\Omega)$. Then the scalar curvature of the graph of $f$ is 
  \begin{align}
    R(Df, D^2 f) =  \sum_j \partial_j \sum_i \left(\frac{f_{ii} f_j - f_{ij} f_i}{1+|Df|^2} \right). \label{eq:div}
  \end{align}
\end{prop}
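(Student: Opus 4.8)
The plan is to compute the scalar curvature of a graph extrinsically, using the shape operator, and then recognize the resulting second-order expression as a divergence. Recall that for the graph of $f$ with upward unit normal $\nu = (-Df,1)/w$, where $w = \sqrt{1+|Df|^2}$, the second fundamental form (as a bilinear form on the ambient tangent vectors $\partial_i + f_i\partial_{n+1}$) has entries $f_{ij}/w$, while the induced metric is $g_{ij} = \delta_{ij} + f_if_j$. By the Gauss equation, $R(f) = H^2 - |A|^2$ where $H = \mathrm{tr}_g(A)$ and $|A|^2 = \mathrm{tr}_g(A\,g^{-1}A)$ are computed with respect to $g$. Equivalently, writing $B = (f_{ij})$ and using $g^{-1} = I - \tfrac{1}{w^2}Df\otimes Df$, one has $R(f) = \tfrac{1}{w^2}\big[(\mathrm{tr}(g^{-1}B))^2 - \mathrm{tr}(g^{-1}Bg^{-1}B)\big]$, which is, up to the scalar factor from the metric determinant, $2\sigma_2$ of the shape operator $S = g^{-1}B$. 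The first step is therefore to write $R(f)$ cleanly as a polynomial expression in the $f_i$ and $f_{ij}$.

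The second step is to observe that the right-hand side of \eqref{eq:div}, namely $\sum_j \partial_j \sum_i \big((f_{ii}f_j - f_{ij}f_i)/w^2\big)$, when the outer derivative $\partial_j$ is expanded by the product and quotient rules, produces exactly the same polynomial. Expanding, $\partial_j\big((f_{ii}f_j - f_{ij}f_i)/w^2\big)$ yields terms: the third derivatives $f_{iij}f_j + f_{ii}f_{jj} - f_{ijj}f_i - f_{ij}f_{ij}$ divided by $w^2$, plus a term coming from $\partial_j(1/w^2) = -2\sum_k f_kf_{kj}/w^4$ multiplying $(f_{ii}f_j - f_{ij}f_i)$. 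Summing over $i$ and $j$, the third-derivative contributions $\sum_{i,j}(f_{iij}f_j - f_{ijj}f_i)/w^2$ cancel by symmetry (swap the summation indices $i\leftrightarrow j$), which is the crucial point making the expression second order. What remains is $\sum_{i,j}(f_{ii}f_{jj} - f_{ij}^2)/w^2$ minus $2\sum_{i,j,k}f_kf_{kj}(f_{ii}f_j - f_{ij}f_i)/w^4$, and the task is to check this equals the expression for $R(f)$ from Step 1.

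The third step is the bookkeeping identity: reconcile the two expressions. The quantity $\sum_{i,j}(f_{ii}f_{jj}-f_{ij}^2)$ is $2\sigma_2(B)$, i.e.\ twice the second elementary symmetric function of the Hessian; the correction terms with the $Df$ factors account precisely for the difference between $2\sigma_2$ of the Hessian $B$ and $2\sigma_2$ of the shape operator $S = g^{-1}B$, together with the metric volume factor $\det g = w^2$ that relates intrinsic traces to Euclidean ones. This is most transparent if one diagonalizes: at a point, rotate so that $Df$ points along $e_1$, so $|Df|^2 = f_1^2$, $g = \mathrm{diag}(w^2,1,\dots,1)$, and one checks both sides reduce to the same rational function of $f_1$ and the $f_{ij}$.

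The main obstacle is purely the algebra of Step 3 — matching the $w^{-4}$ correction terms coming from differentiating $1/w^2$ against the difference between $\sigma_2$ of the Hessian and $\sigma_2$ of the shape operator. This is routine but must be done carefully; diagonalizing the metric at the point (as above) reduces it to a manageable one-parameter computation, and the symmetry cancellation of third derivatives in Step 2 is what guarantees the answer can be second order at all. No genuine conceptual difficulty arises; the Gauss equation and the quotient-rule expansion do all the work.
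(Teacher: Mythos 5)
Your approach is correct, but it takes a genuinely different route from the paper's. You propose to compute $R(f)$ and the expanded right-hand side of \eqref{eq:div} independently and then verify that the two rational polynomial expressions agree, relying on the symmetry cancellation $\sum_{i,j}(f_{iij}f_j - f_{ijj}f_i) = 0$ to see that the candidate divergence is second order, and on diagonalization (rotating $Df$ onto $e_1$) to manage the remaining bookkeeping. That works; I checked the algebra reduces correctly (e.g.\ for $n=2$ both sides give $2(f_{xx}f_{yy}-f_{xy}^2)/w^4$). But the paper never needs a matching step: starting from the Gauss equation $R = \sum_{i,j}(A^i_i A^j_j - A^i_j A^j_i)$ with $A^i_j = \partial_j(f_i/w)$, it writes each product as a divergence minus a correction,
\[
A^i_i A^j_j = \partial_j\!\Big[\Big(\tfrac{f_i}{w}\Big)_i \tfrac{f_j}{w}\Big] - \tfrac{f_j}{w}\Big(\tfrac{f_i}{w}\Big)_{ij},
\qquad
A^j_i A^i_j = \partial_i\!\Big[\Big(\tfrac{f_i}{w}\Big)_j \tfrac{f_j}{w}\Big] - \tfrac{f_j}{w}\Big(\tfrac{f_i}{w}\Big)_{ij},
\]
and the correction terms are \emph{identical}, so they cancel on subtraction, leaving the divergence form directly; the $w$-factors in the resulting integrand then collapse to $(f_{ii}f_j - f_{ij}f_i)/w^2$ by the quotient rule. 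This derivation buys you two things over yours: it avoids expanding either side into a large polynomial in $f_i, f_{ij}$, and it makes the divergence structure appear as an algebraic identity rather than a post-hoc coincidence to be verified. Your Step 3 is where all the genuine computational risk sits, and you declare it ``routine'' without carrying it out; it is correct, but if you want a complete proof you should either finish that algebra or adopt the paper's integration-by-parts identity, which sidesteps it entirely.
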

\begin{pf}
By Gauss equation,
\[
     R = 2 \sigma_2 (A) = \sum_{i,j} (A_i^i A_j^j - A_i^j A_j^i),
\]
  in which $A = (A_j^i)$ is the shape operator with
\[
     A_j^i = \partial_j \big(\frac{f_i}{w}\big) = \Big(\frac{f_i}{w}\Big)_j, \qquad \textup{where $w = \sqrt{1 + |D f|^2}$.}
\]
  Observe that
  \begin{align*}
     A_i^i A_j^j 
     & = \partial_j \big[\Big(\frac{f_i}{w} \Big)_i \frac{f_j}{w} \big] - \frac{f_j}{w} \Big(\frac{f_i}{w}\Big)_{ij}, \\
     A_i^j A_j^i & =  \partial_i \big[\Big(\frac{f_i}{w} \Big)_j \frac{f_j}{w} \big] - \frac{f_j}{w} \Big(\frac{f_i}{w}\Big)_{ij}.
  \end{align*}
  It follows that
  \begin{align*}
    R 
    & = \sum_{j} \partial_j \sum_i \left[ \Big(\frac{f_i}{w} \Big)_i \frac{f_j}{w} -  \Big(\frac{f_j}{w} \Big)_i \frac{f_i}{w}\right] \\
    & = \sum_j \partial_j \sum_i \left(\frac{f_{ii} f_j - f_{ij} f_i}{w^2} \right).
  \end{align*}
\qed
\end{pf}

\begin{defi}[cf. \cite{L}] \label{de:mass}
Let $M$ be a $C^2$  asymptotically flat hypersurface. Let $N$ be one end of $M$, which is the graph of $f$ over the exterior of a bounded region in the hyperplane $\Pi$.  The mass of $N$ is  defined by
\begin{align} \label{eq:mass} 
	m &= \frac{1}{2(n-1)  \omega_{n-1}}  \lim_{r\rightarrow \infty}\int_{S_r} \frac{1}{ 1 + |Df|^2 } \sum_{i,j}  (f_{ii} f_j - f_{ij} f_i)  \frac{x^j}{|x|}  \, d\sigma,
\end{align}
where  $S_r = \{ (x^1, \dots, x^n) \in \Pi: |x|=r\}$, $d\sigma$ is the standard spherical volume measure of $S_r$, and $\omega_{n-1}$ is the volume of the unit $(n-1)$ sphere in Euclidean space. 
\end{defi}

\begin{lemma} \label{le:mass-level-set}
Let $M$ be a  $C^2$  asymptotically flat hypersurface.  Let $N$ be an end of $M$, which is the graph of $f$ over the exterior of a bounded region in the hyperplane $\Pi$.  If there exists a bounded region $\Omega_r$ in $\Pi$ such that $\p \Omega_r$ is the disjoint union of  $S_r $ and $\Sigma = \{ x\in \Pi: f(x) = c\}$ for some constant $c$, and that $|Df|$ does not vanish on $\Sigma$, then
\[
	m= \frac{1}{2(n-1)  \omega_{n-1}}\left( \int_{\Sigma} \frac{|Df|^2}{ 1 + |Df|^2} H_{\Sigma}\, d\sigma + \lim_{r \to \infty} \int_{\Omega_r} R(Df, D^2 f) \, dx \right),
\]
where $R(Df, D^2 f)$ is the scalar curvature of the graph of $f$,  $\eta$ is the unit normal vector on $\Sigma$ pointing away from $\Omega_r$, and $H_{\Sigma}$ is the mean curvature of $\Sigma$ with respect to $\eta$.
\end{lemma}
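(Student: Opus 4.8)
\medskip

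The plan is to apply the divergence theorem to the identity \eqref{eq:div} on the region $\Omega_r$ and then identify each boundary term. First I would write, using the divergence form $R(f) = \sum_j \p_j X^j$ with $X^j = \sum_i (f_{ii}f_j - f_{ij}f_i)/(1+|Df|^2)$, that
\[
	\int_{\Omega_r} R(f)\, dx = \int_{S_r} \langle X, x/|x|\rangle\, d\sigma - \int_{\Sigma} \langle X, \eta\rangle\, d\sigma,
\]
where the sign on the $\Sigma$-integral reflects that $\eta$ points \emph{away} from $\Omega_r$, hence is the inner normal relative to $\p\Omega_r$, while $x/|x|$ is the outer normal along $S_r$ (I should check this orientation bookkeeping carefully — it is the one genuinely error-prone place). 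Comparing with \eqref{eq:mass}, the $S_r$-integral is exactly $2(n-1)\omega_{n-1}\,m$ in the limit $r\to\infty$, so rearranging gives
\[
	2(n-1)\omega_{n-1}\, m = \int_{\Sigma}\langle X,\eta\rangle\, d\sigma + \lim_{r\to\infty}\int_{\Omega_r} R(f)\, dx.
\]

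\medskip

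The remaining task is to show $\langle X,\eta\rangle = \dfrac{|Df|^2}{1+|Df|^2}\, H_\Sigma$ on $\Sigma$, where $\eta = Df/|Df|$ (the sign: since $\{f=c\}$ bounds $\Omega_r$ together with $S_r$, and $f$ equals the constant $c$ on $\Sigma$, on a neighborhood $f$ is monotone across $\Sigma$; I would fix orientations so that $Df/|Df|$ indeed points away from $\Omega_r$, which is consistent with the hypothesis statement, or else absorb a sign). Dotting $X$ with $Df/|Df|$, the numerator contributes $\sum_{i,j}(f_{ii}f_j - f_{ij}f_i) f_j = |Df|^2 \sum_i f_{ii} - \sum_{i,j} f_{ij}f_i f_j$. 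Recognizing $\sum_i f_{ii} = \Delta_0 f$ and $\sum_{i,j} f_{ij} f_i f_j = \langle D^2 f\, Df, Df\rangle$, this is precisely $|Df|$ times the (unnormalized) mean curvature of the level set: recall $H_\Sigma = \dive_0(Df/|Df|) = \big(|Df|^2 \Delta_0 f - \langle D^2 f\, Df, Df\rangle\big)/|Df|^3$ up to the sign convention $H_\Sigma = -\dive_0\eta$ used in this paper. Carrying this through, $\langle X,\eta\rangle = \dfrac{1}{|Df|}\cdot\dfrac{|Df|^3 H_\Sigma}{1+|Df|^2} = \dfrac{|Df|^2}{1+|Df|^2} H_\Sigma$, which is the claimed integrand. (I would double-check the sign against the convention $H_N = -\dive_0\mu$ fixed earlier, flipping $\eta\mapsto-\eta$ if needed so that the final formula matches the stated one.)

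\medskip

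The main obstacle I anticipate is not the computation but the orientation/sign bookkeeping: making sure that the normal $\eta$ ``pointing away from $\Omega_r$'' in the hypothesis matches $Df/|Df|$ (which depends on whether $f>c$ or $f<c$ on the $\Omega_r$ side), and that this is consistent with both the divergence-theorem sign and the paper's convention $H_\Sigma = -\dive_0\eta$. One also needs a word about why the limit $\lim_{r\to\infty}\int_{\Omega_r} R(f)\,dx$ makes sense as written — it is either finite or $+\infty$ since $R(f)\ge 0$ is not assumed here, but the identity holds in $[-\infty,+\infty]$ regardless because for each fixed $r$ all three quantities are finite and we simply pass to the limit; alternatively, when $R\ge0$ the monotone convergence makes the limit unambiguous. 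Everything else is the fundamental theorem of calculus applied to a manifestly integrable divergence-form expression on the compact region $\Omega_r$.
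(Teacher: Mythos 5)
Your overall strategy matches the paper's: apply the divergence theorem to the divergence-form expression \eqref{eq:div} on $\Omega_r$, identify the $S_r$-flux with $2(n-1)\omega_{n-1}m$ in the limit $r\to\infty$, and recognize the $\Sigma$-flux as the $H_\Sigma$ term. The key computational identity you use, namely
\[
\sum_{i,j}(f_{ii}f_j - f_{ij}f_i)f_j = |Df|^3\,\mbox{div}_0\!\left(\frac{Df}{|Df|}\right),
\quad\text{hence}\quad
\Bigl\langle X,\tfrac{Df}{|Df|}\Bigr\rangle = \frac{|Df|^2}{1+|Df|^2}\,\mbox{div}_0\!\left(\frac{Df}{|Df|}\right),
\]
is exactly the one the paper uses.

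However, you reach the correct conclusion via two cancelling sign errors, both in the bookkeeping you yourself flag as the error-prone step. First, since $\eta$ points \emph{away from} $\Omega_r$, it is the \emph{outward} unit normal to $\partial\Omega_r$ along $\Sigma$ (not the inner normal, as you wrote), so the divergence theorem reads
\[
\int_{\Omega_r} R(f)\,dx = \int_{S_r}\langle X, x/|x|\rangle\,d\sigma + \int_{\Sigma}\langle X,\eta\rangle\,d\sigma,
\]
with a plus sign on the $\Sigma$-term. Second, with the paper's convention $H_\Sigma = -\mbox{div}_0\,\eta$, one gets $\langle X,\eta\rangle = -\frac{|Df|^2}{1+|Df|^2}H_\Sigma$ for \emph{either} choice $\eta=\pm Df/|Df|$, again with a minus sign; your computation implicitly uses $H_\Sigma = +\mbox{div}_0(Df/|Df|)$ (which corresponds to $\eta=-Df/|Df|$) while simultaneously dotting $X$ with $\eta=+Df/|Df|$, conflating the two cases. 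The two sign slips cancel, so the stated formula comes out right; to make the argument airtight, fix one orientation of $\eta$, track $H_\Sigma=-\mbox{div}_0\,\eta$ consistently, and observe — as the paper does — that both signs of $\eta$ lead to the same identity \eqref{eq:pmt}.
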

\begin{pf}
Applying the divergence theorem to \eqref{eq:div} over $\Omega_r$ yields
\begin{align*} 
	&\int_{S_r} \frac{1}{ 1 + |Df|^2 } \sum_{i,j=1}^n  (f_{ii} f_j - f_{ij} f_i)  \frac{x^j}{|x|}  \, d\sigma \\
	&=  \int_{\Omega_r} R(Df, D^2 f) \, dx- \int_{\Sigma} \frac{1}{ 1 + |Df|^2 } \sum_{i,j=1}^n  (f_{ii} f_j - f_{ij} f_i) \eta^j  \, d\sigma.
\end{align*}
Because $\Sigma$ is a level set of $f$, $\eta$ equals either $Df/ |Df|$ or $- Df/ |Df|$. If $\eta = - Df/ |Df|$,
\begin{align*}
 	 H_{\Sigma} &= - \mbox{div}_0 \eta= \sum_{i=1}^n \frac{\partial}{ \partial x^i}\left( \frac{f_i}{ |Df|}\right)\notag\\
	& = \frac{ 1}{ |Df|^3} \sum_{i,j=1}^n( f_{ii}f_{j}f_j-  f_{ij} f_i f_j ).
\end{align*}
We then derive
\begin{align} \label{eq:pmt}
	\begin{split}
		&\int_{S_r} \frac{1}{ 1 + |Df|^2 } \sum_{i,j=1}^n  (f_{ii} f_j - f_{ij} f_i)  \frac{x^j}{|x|}  \, d\sigma  \\
		&= \int_{\Omega_r} R(Df, D^2 f) \, dx+ \int_{\Sigma} \frac{|Df|^2}{ 1 + |Df|^2} H_{\Sigma}\, d\sigma.
	\end{split}
\end{align}
If $\eta = Df/ |Df|$, we also derive the same identity. Letting $r\to \infty$, we prove the lemma.
\qed
\end{pf}

Generally,  $\Omega_r$  may not exist. We shall prove that if $M$ has nonnegative scalar curvature, then such $\Omega_r$ exists, and moreover  $H_{\Sigma} \ge 0$.  

\begin{pf2}
We assume that $M$ is not a hyperplane; otherwise the theorem trivially holds. Consider an end $N$ of $M$, and suppose  $N$ is the graph of $f$ over $\{x^{n+1} = 0\} \setminus B_{r_1}$ for some $r_1>0$. By Theorem~\ref{th:halfspace}, $H$ has a sign on $M$. We may without loss of generality assume that $H\ge 0$ with respect to $\nu$, where $\nu$ is the upward unit normal to $N$, given by 
\[
		\nu = \frac{(-Df, 1)}{\sqrt{1+ |Df|^2}}.
\] 
(Otherwise, we may replace $f$ by $-f$.) We divide the proof  into the following cases. \vspace{10pt}

\noindent{\bf Case 1:} $\lim_{|x| \to \infty} f(x) = a$ for some bounded constant $a$. By translation, we may assume that $|f(x)| \to 0$ as $|x| \to \infty$; namely, $N$ is asymptotic to the hyperplane $\{ x^{n+1} = 0 \}$. By proof of Corollary \ref{co:half-space}, $N \subset \{ x^{n+1} < 0\}$. Therefore, for $\epsilon>0$ sufficiently small, some connected components of the level set $\{ x\in \{ x^{n+1} = 0\}: f(x) = -\epsilon\}$ lie in $N$ with no  boundary. We define $\Sigma_{-\epsilon}$ to be an outermost connected component, i.e., $\Sigma_{-\epsilon}$ is not enclosed by other components. By Morse--Sard theorem, $\Sigma_{-\epsilon}$ is  $C^{n+1}$ for almost every $\epsilon$.  Moreover, because $f$ tends to zero,  for some small $\epsilon > 0$,  $\eta = - Df/ |Df|$ is the unit vector on $\Sigma_{-\epsilon}$, pointing inward to the bounded region in $\{x^{n+1} = 0\}$ enclosed by $\Sigma_{-\epsilon}$. Let $H_{\Sigma_{-\epsilon}}$ be the mean curvature of $\Sigma_{-\epsilon}$ defined by $\eta$. Then, $H_{\Sigma_{-\epsilon}} \ge 0$ by Corollary \ref{co:HHR} and by $H\ge 0$.  Applying Lemma~\ref{le:mass-level-set}, we have $m\ge 0$. 

If $m=0$, then $M$ must be identical to $\{ x^{n+1} = 0\}$. For, otherwise, there exists some positive $\epsilon$ so that $\Sigma_{-\epsilon}$ has $H_{\Sigma_{-\epsilon}} \equiv 0$ by \eqref{eq:pmt}. This contradicts compactness of $\Sigma_{-\epsilon}$. \vspace{10pt}

\noindent{\bf Case 2:}  $\lim_{|x| \to \infty} f(x) = \infty$. The set $\{ x\in \{ x^{n+1} = 0 \}: f(x) = \Lambda\}$ lies in $\{ x^{n+1} \} \setminus B_{r_1}$ for $\Lambda \gg 1$. Let $\Sigma_{\Lambda}$ be the outmost component of the above set. For $\Lambda$ sufficiently large, $\eta = -Df/ |Df|$ is the normal vector to $\Sigma_{\Lambda}$ pointing inward to the bounded region enclosed by $\Sigma_{\Lambda}$. Let $H_{\Lambda}$ be the mean curvature with respect to $\eta$. Hence, $H_{\Sigma_{\Lambda}} \ge 0$ by Corollary \ref{co:HHR} and by $H\ge 0$.  Applying Lemma~\ref{le:mass-level-set}, we have $m\ge 0$. If $m=0$, we can show that $M$ is identical to a hyperplane as in Case 1.\vspace{10pt}

\noindent{\bf Case 3:} $\lim_{|x| \to \infty} f(x) = -\infty$. This case cannot happen. Otherwise, for some $\Lambda\gg 1 $, there is a closed submanifold $\Sigma_{-\Lambda} \subset \{ x\in \{x^{n+1}=0\} \setminus B_{r_1} : f(x) =  - \Lambda\}$ so that the unit normal vector $\eta = Df/|Df|$ is pointing inward to the region enclosed by $\Sigma_{-\Lambda}$. Let $H_{\Sigma_{-\Lambda}}$ be the mean curvature with respect to $\eta$. Then, $H_{\Sigma_{-\Lambda}} \le 0$ by Corollary~\ref{co:HHR}. This contradicts compactness of $\Sigma_{-\Lambda}$. 

\qed
\end{pf2}

Last, we verify below that our definition of the mass~\eqref{eq:mass} coincides with the classical definition of the ADM mass, if we assume stronger fall-off rates on the derivatives of $f$. Let us recall the definition of the ADM mass (see, for example, \cite[Equation (4.1)]{Bartnik}). 
\begin{defi}
We say that an $n$-dimensional manifold $(M, g)$ has an asymptotically flat end $N$ if $N\subset M$ is diffeomorphic to $\mathbb{R}^n \setminus B_1$ and $N$ has a coordinate chart $\{y\}$ so that $g_{ij} (y)= \delta_{ij} + O_2(|y|^{-q})$  for some $q> (n-2)/2$. The $O_2$ indicates that first and second derivatives also decay at rates one and two orders faster, respectively. 

For $n\ge 3$, the ADM mass of the asymptotically flat end $N$ is defined by
\begin{align}\label{eq:classical-mass}
	 \frac{1}{2(n-1)  \omega_{n-1}} \lim_{r\rightarrow \infty} \int_{|y|=r} \sum_{i,j} \left(\frac{\partial g_{ij}}{\partial y^i} - \frac{\partial g_{ii} }{\partial y^j} \right) \tau^j\, d\sigma_g,
\end{align}
where $\tau$ is the outward unit normal to $\{ |y|=r\}$ with respect to $g$, $d\sigma_g$ is the volume measure of $\{ |y| = r\}$ with respect to $g$, and $\omega_{n-1} = \textup{vol} (\mathbb{S}^{n-1})$. 
\end{defi}

\begin{lemma}  [cf. \cite{L}] \label{le:mass}
Let $n\ge 2$ and $M$  an $n$-dimensional $C^3$ asymptotically flat  hypersurface.  Let $N$ be an end of $M$ which is the graph of $f$. Let $R$ be the scalar curvature of $M$ and $R\in L^1(N)$. Then, the mass of $N$ defined by \eqref{eq:mass} is  finite.

If in addition $n\ge 3$, $|Df(x)|^2 = O_2(|x|^{-q})$ for some $q> (n-2)/2$, and $|Df(x)|^2 |D^2 f(x)| = o(|x|^{1-n})$ as $|x| \to \infty$, then \eqref{eq:mass} equals \eqref{eq:classical-mass}.
\end{lemma}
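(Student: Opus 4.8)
The plan is to prove the two assertions separately. For finiteness of the mass, the key is the divergence-form identity \eqref{eq:div}: we have $R(f) = \sum_j \partial_j X_j$ where $X_j = \sum_i (f_{ii}f_j - f_{ij}f_i)/(1+|Df|^2)$. First I would fix $r_1$ large enough that $N$ is the graph of $f$ over $\{x^{n+1}=0\}\setminus B_{r_1}$ and apply the divergence theorem to \eqref{eq:div} on the annular region $B_r\setminus B_{r_1}$ for $r > r_1$, obtaining
\[
\int_{S_r} \sum_j X_j \frac{x^j}{|x|}\, d\sigma - \int_{S_{r_1}} \sum_j X_j \frac{x^j}{|x|}\, d\sigma = \int_{B_r\setminus B_{r_1}} R(f)\, dx.
\]
Since $R \in L^1(N)$, the right-hand side converges as $r \to \infty$; hence $\lim_{r\to\infty}\int_{S_r}\sum_j X_j \frac{x^j}{|x|}\,d\sigma$ exists and is finite. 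Dividing by $2(n-1)\omega_{n-1}$, this is precisely the mass \eqref{eq:mass}, so $m$ is finite. Here the hypothesis $|Df| = O(1)$ is used only to ensure the integrand on $S_{r_1}$ (a fixed sphere) is integrable and that no spurious divergence enters from the boundary term; the real work is carried entirely by $R\in L^1$ together with Stokes.

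For the second assertion, the plan is to compare \eqref{eq:mass} with the ADM integrand \eqref{eq:classical-mass} applied to the induced metric $g_{ij} = \delta_{ij} + f_i f_j$ on $N$, using the graph coordinates $y = x$. I would compute $\partial_k g_{ij} = f_{ik}f_j + f_i f_{jk}$, hence
\[
\sum_{i}\Big(\partial_i g_{ij} - \partial_j g_{ii}\Big) = \sum_i\big(f_{ii}f_j + f_i f_{ij}\big) - \sum_i 2 f_{ij}f_i = \sum_i\big(f_{ii}f_j - f_{ij}f_i\big).
\]
So the ADM integrand over $\{|y|=r\}$ differs from \eqref{eq:mass}'s integrand only through (a) the factor $1/(1+|Df|^2)$, (b) the difference between the Euclidean normal $x^j/|x|$ and the $g$-normal $\tau^j$, and (c) the difference between $d\sigma$ and $d\sigma_g$. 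Under $|Df|^2 = O_2(|x|^{-q})$ with $q > (n-2)/2$, each of these replacements introduces a multiplicative error of size $1 + O(|x|^{-q})$, and the "main" part of the integrand is $\sum_{i,j}(f_{ii}f_j - f_{ij}f_i)x^j/|x|$, which by the decay hypothesis $|Df|^2|D^2 f| = o(|x|^{1-n})$ satisfies $\int_{|y|=r}\big|\text{integrand}\big|\,d\sigma = o(1)\cdot$, wait — more precisely the integrand is $o(|x|^{1-n})$ pointwise, and $S_r$ has area $\sim \omega_{n-1}r^{n-1}$, so the error terms of the form $O(|x|^{-q})\cdot|\text{integrand}|$ integrate to $o(1)$ as $r\to\infty$. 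Therefore the limit defining \eqref{eq:classical-mass} equals the limit defining \eqref{eq:mass}.

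The main obstacle I anticipate is the careful bookkeeping in the second part: one must verify that every discrepancy between the extrinsic/Euclidean quantities in \eqref{eq:mass} and the intrinsic/$g$-quantities in \eqref{eq:classical-mass} — the normal vectors, the volume forms, and the position of the factor $(1+|Df|^2)^{-1}$ — contributes only a term that vanishes in the limit. The normal vector $\tau = \nabla_g |y| / |\nabla_g |y||_g$ expands as $x/|x| + O(|Df|^2)$, and $d\sigma_g = (1 + O(|Df|^2))\,d\sigma$; combining these with the bound on the leading integrand shows each correction is $o(|x|^{1-n})$ pointwise, hence integrates to zero over $S_r$ in the limit. The decay exponent condition $q > (n-2)/2$ is exactly what guarantees these error integrals tend to zero (it is the standard Bartnik threshold), and $|Df|^2|D^2f| = o(|x|^{1-n})$ is what guarantees the leading term itself has a well-defined, coordinate-independent limit. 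Assembling these estimates is routine but must be done with care to match the precise form of \eqref{eq:classical-mass}.
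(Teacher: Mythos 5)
Your proposal follows essentially the same route as the paper: Part 1 is the divergence theorem applied to the identity \eqref{eq:div} together with $R\in L^1$; Part 2 is the same coordinate computation, and in particular you reproduce the paper's key algebraic step $\sum_i(\partial_i g_{ij}-\partial_j g_{ii})=\sum_i(f_{ii}f_j-f_{ij}f_i)$. The only substantive difference is that for the ADM side the paper invokes \cite[Proposition~4.1]{Bartnik} as a black box (which guarantees that, under $q>(n-2)/2$ and $R\in L^1$, the flux integral on the left-hand side of the analogue of \eqref{eq:Sr} converges to the coordinate-invariant ADM mass), while you propose to estimate all the discrepancies by hand; these are compatible, and the paper's citation saves you from carrying out the full Bartnik argument.

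One small imprecision to fix in the writeup: when you say ``more precisely the integrand is $o(|x|^{1-n})$ pointwise,'' that bound does not hold for the main term $\sum_{i,j}(f_{ii}f_j-f_{ij}f_i)x^j/|x|$, which is only of size $O(|Df|\,|D^2f|)$; the hypothesis controls $|Df|^2|D^2f|$, not $|Df|\,|D^2f|$. What you actually need, and what your later sentence delivers, is that each \emph{error} term is of size $O(|Df|^2)\cdot O(|Df|\,|D^2f|)=O(|Df|^3|D^2f|)=o(1)\cdot o(|x|^{1-n})$, so its integral over $S_r$ is $o(1)$, while the integral of the main term already converges by Part~1. Tidying that sentence closes the gap; the rest of your bookkeeping (normal vector, volume element, factor $(1+|Df|^2)^{-1}$, each contributing a multiplicative $1+O(|Df|^2)$) matches the paper's \eqref{eq:Sr}.
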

\begin{pf}
Applying the divergence theorem to \eqref{eq:div} yields
\begin{align*}
	 & 2(n-1)  \omega_{n-1}  m\\
	 & =  \int_{S_{r_0}} \frac{1}{ 1 + |Df|^2 } \sum_{i,j =1}^n  (f_{ii} f_j - f_{ij} f_i)  \frac{x^j}{|x|}  \, d\sigma+ \lim_{r\rightarrow \infty} \int_{r_0 \le |x| \le r} R(Df, D^2 f) \, dx.
\end{align*}
Because $R$ is integrable over $N$ and $|Df(x)|=o(1)$ as $|x| \to \infty$, the second term on the right hand side is bounded.
Therefore, $m$ is bounded.

To prove the second statement, we consider the coordinate chart $\{y\}$ of $N$, where
\[
	y^i =(0, \dots, \underbrace{x^i}_{i-\mbox{th}}, \dots, 0, f(0, \dots, \underbrace{x^i}_{i-\mbox{th}}, \dots, 0)).
\]
Then
\[
	\frac{\partial}{\partial y^i} = \partial_i + f_i \partial_{n+1}, 
\]
where we denote $\partial_i = \frac{\partial}{\partial x^i}$. Moreover, at the point $(x, f(x)) \in N$, 
\begin{align*}
	g_{ij} &= \langle\frac{\partial}{\partial y^i}, \frac{\partial}{\partial y^j}\rangle = \delta_{ij} + f_i f_j,\\
	\frac{\partial g_{ij} }{\partial y^k} &= \frac{\partial (f_i f_j) }{\partial x^k} = f_{ik} f_j + f_i f_{jk}.
\end{align*}
Therefore, $N$ is an asymptotically flat end of $M$ by hypothesis of $|Df|$. Denote by $\mu =\sum_{i=1}^n \frac{x^i}{r} \p_i$ the outer unit normal to $S_r$ in $\{x^{n+1} = 0\}$. Let $\tau$ be  the outer unit normal to the graph of $f$ over $S_r$ in $M$. Then, 
\[	
	\tau = \frac{ \mu + \mu(f) \partial_{n+1} }{\sqrt{1 +  |\mu(f) |^2}},
\]
and hence, 
\[
	\tau^j = g(\tau,\frac{\partial}{\partial y^j}  ) = \langle\tau,\frac{\partial}{\partial y^j}  \rangle= \frac{\mu^j + \mu (f) f_i}{ \sqrt{1+ | \mu(f)|^2}}.
\]
It follows that
\begin{align}\label{eq:Sr}
\begin{split}
	 & \int_{f( S_r)} \sum_{i,j} (\frac{\partial g_{ij}}{\partial y^i} - \frac{\partial g_{ii} }{\partial y^j} ) \tau^j\, d\sigma_g \\
	 &= \int_{S_r} \sum_{i,j}  (f_{ii} f_j - f_{ij} f_i)  \frac{\mu^j + \mu (f)  f_j}{ \sqrt{1+ |\mu(f)|^2}} \sqrt{1+ |D^T f|^2}\, d\sigma, 
\end{split}
\end{align}
where $d\sigma_g$ and $d\sigma$ are the $(n-1)$-Hausdorff measures on $f(S_r)$ and $S_r$ respectively, and $D^T f$ is the derivative along the directions tangent to $S_r$.  By \cite[Proposition 4.1]{Bartnik} and hypotheses on the derivatives of $f$, the left hand side of \eqref{eq:Sr} converges to \eqref{eq:classical-mass} as $r\to\infty$, and the right hand side of \eqref{eq:Sr} converges to \eqref{eq:mass}. 
\qed
\end{pf}

\appendix
\section{Topological results} \label{se:appendix}

For a topological space $X$, we denote by $\tilde{H}_k(X)$ the $k$th reduced homology group of $X$ with coefficient in $\mathbb{Z}$. Recall that the rank of $\tilde{H}_0(X)$ plus one equals the number of path-connected components of $X$ (see~\cite[p. 110]{Hat}, for example). 
\begin{lemma} \label{le:connected}
Let $X$ be a contractible topological space. Let $U, V$ be two subsets in $X$ so that $X = \textup{int} (U) \cup \textup{int} (V)$. Then
\[
	\tilde{H}_0 (U\cap V) \approx \tilde{H}_0(U) \oplus \tilde{H}_0(V).
\]
 where $\approx$ stands for the group isomorphism. 
 
 As a consequence,  the number of path-connected components of $U$ plus the number of path-connected component of $V$ equals the number of path-connected components of $U\cap V$ plus one.
\end{lemma}
\begin{proof}
 Applying the Mayer--Vietoris sequence to $X = \textup{int}(U) \cup \textup{int}(V)$ yields
    \[
      \cdots \to \tilde{H}_1(X) \to \tilde{H}_0(U\cap V) \to \tilde{H}_0(U) \oplus \tilde{H}_0(V) \to \tilde{H}_0(X) \to 0.
  \]
Because $X$ is contractible and $\tilde{H}_k(X) = 0$ for all $k$, it completes the proof.
\end{proof}

Recall the definition that $E$ \emph{locally separates} $X$ if there exists an open neighborhood $N$ of $E$ so that $N\setminus E$ is disconnected. We say that $E$ \emph{separates} $X$ if $X\setminus E$ is disconnected.
\begin{prop} \label{pr:enclose}
Let $X$ be a contractible topological space. Let $E$ be a connected closed subset of $X$. If $E$ locally separates $X$, then $E$ separates $X$. In particular, if $E$ is bounded, then $E$ encloses a bounded open set $\Omega$ of $X$, i.e. $\Omega \cap E =\emptyset$ and $\partial \Omega \subset E$.
\end{prop}
\begin{proof}
Because $E$ lies in some connected component of $N$, we may without loss of generality assume that  $N$ is connected. Applying Lemma~\ref{le:connected} for $U=X \setminus E$ and $V= N$ yields that $X \setminus E$ has at least two connected components. If $E$ is bounded, $X \setminus E$ has almost one unbounded component. Let $\Omega$ be a bounded component of $X \setminus E$. Then $\partial \Omega \subset E$.
\end{proof}

\begin{prop} \label{pr:boundary}
Let $X$ be a contractible topological space. Let $\Omega$ be a connected open subset of $X$ and $X\setminus \overline{\Omega} = \sqcup U_{\alpha}$ where each $U_{\alpha}$ is a connected component. Then $\partial U_{\alpha}$ is connected.
\end{prop}
\begin{proof}
Without loss of generality, we may assume that $X\setminus \overline{\Omega} = U_{\alpha_0}$ is connected. Otherwise, we may replace $\overline{\Omega}$ by  $\mbox{cl}( \overline{\Omega} \cup (\sqcup_{\alpha \neq \alpha_0} \overline{U}_{\alpha}))$ which is connected, where $\mbox{cl}(E)$ denotes the closure of $E$. Then let $U_{\alpha_0} = X \setminus \mbox{cl}(\overline{\Omega} \cup \sqcup_{\alpha \neq \alpha_0} \overline{U}_{\alpha})$.

Denote $\Sigma = \partial \Omega = \partial U_{\alpha_0}$. Suppose to the contrary that $\Sigma$ is disconnected. Then there exist two disjoint open subsets $A, B\subset X$ with $A\cap \Sigma \neq \emptyset$ and $B\cap \Sigma\neq \emptyset$ so that $\Sigma = \Sigma \cap (A\sqcup B)$. Note that both $A$ and $B$ have non-empty intersection with both  $\Omega$ and $U_{\alpha_0}$, so $\Omega \cup (A\sqcup B)$ and $U_{\alpha_0} \cup (A\sqcup B)$ are both connected.  Applying Lemma~\ref{le:connected} with $U= \Omega \cup (A\sqcup B)$ and $V=U_{\alpha_0} \cup (A\sqcup B)$ yields that $A\sqcup B$ is connected. It leads a contradiction.
\end{proof}
\begin{remark}
The above proposition is not true in general if the condition that $\Omega$ is connected is dropped. For example, let $\Omega$ be the disjoint union of a closed unit ball and a closed annulus in $\mathbb{R}^n$, both centered at the origin. Then its complement contains an annulus which has two boundary components. 
\end{remark}

\begin{thm} \label{th:boundary-separate}
Let $X$ be a contractible topological space.  Let $\Omega \subset X$ be a connected open set and 
  $\overline{\Omega} \ne X$.   Denote $X \setminus \overline{\Omega} =  \sqcup_{\alpha} U_{\alpha}$ where $U_{\alpha}$ are connected components. By Proposition~\ref{pr:boundary}, $\partial U_{\alpha}$ is connected. Let $\Gamma$ be a connected component of  $\partial \Omega$ that contains $\partial U_{\alpha_0}$ for some $\alpha_0$. Then $\Gamma$ separates $X$. In particular, if $\Gamma$ is bounded, then $\Gamma$ encloses a bounded open set $W\subset X$ so that $\partial W\subset\Gamma$.
\end{thm}
\begin{proof}
Note that
  \[
     X\setminus \Gamma = \{U_{\alpha_0}\sqcup (X \setminus \overline{U}_{\alpha_0}) \}\cap (X \setminus \Gamma)
  \]
 and $U_{\alpha_0}$ and $X \setminus \overline{U}_{\alpha_0} $ are disjoint open sets, so $\Gamma$ separates $X$. The rest follows from Proposition~\ref{pr:enclose}.
  \end{proof}
  \begin{remark}\label{re:boundary-component}
  In general, not every connected component of $\partial \Omega$ would separates $X$.  For example, let $\Omega = \mathbb{R}^n \setminus \{ \sqcup_k B_k \cup \{\mbox{the origin}\}\}$ where $B_k$ are disjoint closed balls centered at $(2^{-k},0, \dots, 0)$ of radius $2^{-k-3}$. Then, the origin is a connected component of $\p  \Omega$, but it does not separate $\mathbb{R}^n$.
  \end{remark}

\end{document}